\documentclass[]{amsart}
\usepackage{amscd,amsthm,amssymb,amsfonts,amsmath,euscript}




\theoremstyle{plain}
\newtheorem{thm}{Theorem}[section]
\newtheorem{lemma}[thm]{Lemma}
\newtheorem{prop}[thm]{Proposition}
\newtheorem{cor}[thm]{Corollary}

\theoremstyle{definition}
\newtheorem{defn}[thm]{Definition}

\theoremstyle{remark}
\newtheorem{remark}[thm]{Remark}

\newcommand{\nc}{\newcommand}


\def\makeop#1{\expandafter\def\csname#1\endcsname
  {\mathop{\rm #1}\nolimits}\ignorespaces}
\makeop{Hom}   \makeop{End}   \makeop{Aut}   \makeop{Isom}  \makeop{Pic} 
\makeop{Gal}   \makeop{ord}   \makeop{Char}  \makeop{Div}   \makeop{Lie} 
\makeop{PGL}   \makeop{Corr}  \makeop{PSL}   \makeop{sgn}   \makeop{Spf}
\makeop{Spec}  \makeop{Tr}    \makeop{Nr}    \makeop{Fr}    \makeop{disc}
\makeop{Proj}  \makeop{supp}  \makeop{ker}   \makeop{im}    \makeop{dom}
\makeop{coker} \makeop{Stab}  \makeop{SO}    \makeop{SL}    \makeop{SL}
\makeop{Cl}    \makeop{cond}  \makeop{Br}    \makeop{inv}   \makeop{rank}
\makeop{id}    \makeop{Fil}   \makeop{Frac}  \makeop{GL}    \makeop{SU}
\makeop{Nrd}   \makeop{Sp}    \makeop{Tr}    \makeop{Trd}   \makeop{diag}
\makeop{Res}   \makeop{ind}   \makeop{depth} \makeop{Tr}    \makeop{st}
\makeop{Ad}    \makeop{Int}   \makeop{tr}    \makeop{Sym}   \makeop{can}
\makeop{length}\makeop{SO}    \makeop{torsion} \makeop{GSp} \makeop{Ker}
\makeop{Adm}   \makeop{Mat}
\def\makebb#1{\expandafter\def
  \csname bb#1\endcsname{{\mathbb{#1}}}\ignorespaces}
\def\makebf#1{\expandafter\def\csname bf#1\endcsname{{\bf
      #1}}\ignorespaces} 
\def\makegr#1{\expandafter\def
  \csname gr#1\endcsname{{\mathfrak{#1}}}\ignorespaces}
\def\makescr#1{\expandafter\def
  \csname scr#1\endcsname{{\EuScript{#1}}}\ignorespaces}
\def\makecal#1{\expandafter\def\csname cal#1\endcsname{{\mathcal
      #1}}\ignorespaces} 

\def\doLetters#1{#1A #1B #1C #1D #1E #1F #1G #1H #1I #1J #1K #1L #1M
                 #1N #1O #1P #1Q #1R #1S #1T #1U #1V #1W #1X #1Y #1Z}
\def\doletters#1{#1a #1b #1c #1d #1e #1f #1g #1h #1i #1j #1k #1l #1m
                 #1n #1o #1p #1q #1r #1s #1t #1u #1v #1w #1x #1y #1z}
\doLetters\makebb   \doLetters\makecal  \doLetters\makebf
\doLetters\makescr 
\doletters\makebf   \doLetters\makegr   \doletters\makegr

     \def\qed{\qedmark\medbreak}%
\def\qedmark{{\enspace\vrule height 6pt width 5pt depth 1.5pt}}%
    
\def\Gm{{{\bbG}_{\rm m}}}   

\normalsize

\makeop{Bl}

\def\Spec{{\rm Spec}\,}

\def\Fp{{\bbF}_p}

\def\char{{\rm char\,}}

\newcommand{\Z}{\mathbb Z}
\newcommand{\Q}{\mathbb Q}

\newcommand{\F}{\mathbb F}




\newcommand{\<}{\langle}   
\renewcommand{\>}{\rangle} 

\newcommand{\isoto}{\stackrel{\sim}{\longrightarrow}}
\nc{\embed}{\hookrightarrow}



\newcommand{\ch}{characteristic }

\nc{\ol}{\overline}
\nc{\wt}{\widetilde}
\nc{\opp}{\mathrm{opp}}

\makeop{Ram}
\makeop{Rep}


\begin{document}
\renewcommand{\thefootnote}{\fnsymbol{footnote}}
\setcounter{footnote}{-1}
\numberwithin{equation}{section}


\title[Chow's theorem and algebraic tori]
{Chow's theorem for semi-abelian varieties and 
bounds for splitting fields of algebraic tori}
\author{Chia-Fu Yu}
\address{
Institute of Mathematics, Academia Sinica\\
Astronomy Mathematics Building \\
No.~1, Roosevelt Rd. Sec.~4 \\ 
Taipei, Taiwan, 10617} 
\email{chiafu@math.sinica.edu.tw}

\address{
 National Center for Theoretical Sciences
 No.~1  Roosevelt Rd. Sec.~4,
 National Taiwan University
 Taipei, Taiwan, 10617}


\date{\today}
\subjclass[2010]{20G15,20C10,11G10, 12F12} 
\keywords{algebraic tori, splitting fields, semi-abelian varieties,
  inverse Galois problem}


\begin{abstract}
A theorem of Chow concerns homomorphisms of two abelian varieties under a
primary field extension base change. In this paper we generalize
Chow's theorem to semi-abelian varieties. This contributes to different
proofs of a well-known result that every algebraic torus splits over a
finite separable field extension. We also obtain 
the best bound for the degrees of splitting fields of tori.

\end{abstract} 

\maketitle


\section{Introduction}
\label{sec:introduction}
Let $k$ be a field, $\bar k$ an algebraic closure of $k$, and $k_s$
the separable closure of $k$ in $\bar k$. 
A connected algebraic $k$-group $T$ is an algebraic torus
if there is a $\bar k$-isomorphism 
$T\otimes_k \bar k \simeq (\Gm)^d \otimes
  _k \ol k$ of algebraic groups for some integer $d\ge 0$.
We say $T$ splits over a 
field extension $K$ of $k$ if there is a $K$-isomorphism
$T\otimes_k K\simeq (\Gm)^d\otimes_k K$. 
This paper is motivated from the following fundamental result.

\begin{thm}\label{1.1}
  Any algebraic $k$-torus $T$ splits over $k_s$. In other
  words, $T$ splits over a finite separable field extension of $k$.
\end{thm}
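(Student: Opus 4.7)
My plan is to deduce the splitting of $T$ over a finite separable extension in three stages: first descend a trivializing isomorphism from $\ol k$ to $k_s$ via the promised generalization of Chow's theorem to semi-abelian varieties, and then descend further from $k_s$ to a finite subextension via a standard limit argument.

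First, I would observe that $\ol k/k_s$ is a primary extension, since every element of $\ol k\setminus k_s$ is purely inseparable over $k_s$ and hence $k_s$ is separably closed in $\ol k$. Both $T$ and $\Gm^d$ (base changed to $k_s$) are tori, in particular semi-abelian varieties. Applying the paper's generalization of Chow's theorem to the primary extension $\ol k/k_s$ yields a bijection
\[
  \Hom_{k_s}(T_{k_s},\Gm^d_{k_s}) \isoto \Hom_{\ol k}(T_{\ol k},\Gm^d_{\ol k}),
\]
and similarly with the roles of $T$ and $\Gm^d$ interchanged. Choosing any $\ol k$-isomorphism $\phi\colon T_{\ol k}\isoto \Gm^d_{\ol k}$ (which exists by the definition of a torus), both $\phi$ and $\phi^{-1}$ must already be defined over $k_s$. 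Consequently $T\otimes_k k_s \simeq \Gm^d\otimes_k k_s$, which is the first assertion of the theorem.

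Second, to descend from $k_s$ to a finite subextension, I would invoke a standard spreading-out argument: since $k_s$ is the filtered colimit of its finite subextensions $L/k$ and since $T$ and $\Gm^d$ are of finite type over $k$, any morphism $T_{k_s}\to\Gm^d_{k_s}$ is defined over some finite $L/k$ inside $k_s$. Applying this to $\phi$ and $\phi^{-1}$ separately and taking the compositum produces a single finite separable extension $L/k$ over which both are defined, giving $T\otimes_k L\simeq \Gm^d\otimes_k L$.

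The main obstacle is the first step: extending Chow's theorem from abelian to semi-abelian varieties. The classical proof uses the completeness of abelian varieties in an essential way (e.g.\ through rigidity and the fact that any morphism from a complete variety to an affine one is constant), whereas $\Gm$ and nontrivial tori are affine. I would therefore expect the paper to exploit the defining short exact sequence $1\to T'\to G\to A\to 1$ of a semi-abelian variety, handling the toric and abelian quotients separately — the toric part being accessible to a direct character-theoretic analysis and descent along purely inseparable extensions — and then assembling the pieces via a cohomological or diagram-chasing argument. This structural splitting is the substantive content that the classical Chow theorem does not supply, and the theorem above is the clean payoff once it is available.
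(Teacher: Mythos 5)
Your argument is correct and is essentially the paper's own (Ono-style) proof of Theorem~\ref{1.1}: both deduce it from Theorem~\ref{1.2} applied along the primary extension $\ol k/k_s$, you descending the trivializing group isomorphism $T_{\ol k}\isoto (\Gm)^d_{\ol k}$ (together with its inverse) while the paper descends each character $T\to\Gm$, and your final passage to a finite subextension of $k_s$ is the standard limit argument implicit in the paper's statement. Only your closing guess about how Theorem~\ref{1.2} itself is proved diverges from the paper, which does not d\'evissage along the exact sequence $1\to T'\to G\to A\to 1$ but instead checks $p_1^*(f)=p_2^*(f)$ over $\Spec(K\otimes_k K)$ via faithfully flat descent of morphisms, rigidity of the finite \'etale $\ell^n$-torsion subgroups, and their Zariski density; since you invoke Theorem~\ref{1.2} only as a black box, this does not affect the validity of your proof of Theorem~\ref{1.1}.
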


This theorem is well known and it is stated and proved in the 
literature several times. Surprisingly, different authors 
choose their favorite proofs which are all quite different. 
As far as we know, the first proof is given
by Takashi Ono \cite[Proposition 1.2.1]{ono:ann1961}. 
Armand Borel gives a different proof in his book 
{\it Linear Algebraic Groups}; see
\cite[Proposition 8.11]{borel:lag}. 
In  the second edition of 
his book {\it Linear Algebraic Groups} \cite{springer:lag}, 
T.A.~Springer includes a systematic treatment of the rationality problem 
of algebraic groups where he also gives another proof 
of Theorem~\ref{1.1}; see \cite[Proposition 13.1.1]{springer:lag}. 
Another proof, due to John Tate, can be found in Borel
and Tits \cite[Proposition 1.5]{borel-tits}. 
Jacques Tits himself also provides one proof 
in his Yale University Lectures Notes; 
see \cite[Theorem 1.4.1]{tits:yale1970}. 

A key point in Borel's proof 
is 
that 
any $k_s$-valued point of $T$ is semi-simple. 
In fact, Borel proves Theorem~\ref{1.1} more generally 
for diagonalizable $k$-groups, because 
this property also holds for diagonalizable groups.
%
Springer's proof has more flavor of differential geometry; a key
ingredient 
uses derivations. 
In some sense Springer treats a purely inseparable descent
using derivations and connections though this is not stated
explicitly.
It is  known (cf.~Matsumura~\cite[Chap.~9]{matsumura:crt86}) 
that some hidden information of purely
inseparable field extensions ignored by Galois theory 
can be revealed by derivations and connections. However, 
the way that Springer applies such an inseparable descent to
Theorem~\ref{1.1} is rather interesting.    
Tits' and Tate's proofs use only algebraic properties of characters;
the ideas of their proofs are similar. 
Both start with that a suitable
$p$-power of a character $\chi$ of $T$ is defined over $k_s$ and
prove that $\chi$ is defined over $k_s$. 
A main difference is that Tits works with the coordinate
ring $\bar k [T]$ of $T$ while Tate works with its function field
$\bar k(T)$. That Tate proves the $k_s$-rationality of $\chi$ 
uses the language in Weil's foundation, while 
Tits' argument is more elementary. We shall present only 
the proofs of Ono and Borel as they are most relevant to our results. 

Chow's theorem \cite{chow:tams1955} states as follows: 
Let $K/k$ be a primary field extension,
that is, $k$ is separably algebraically closed in $K$. If $X$ and $Y$
are two abelian varieties over $k$, then the map
$\Hom_k(X,Y)\to \Hom_K(X_K, Y_K)$ is bijective, 
where $X_K:=X\otimes_k K$ and $Y_K:=Y\otimes_k K$. 
T.~Ono observes \cite[Lemma 1.2.1]{ono:ann1961} that 
Theorem~\ref{1.1} follows immediately from 
an analogue of Chow's theorem for tori (i.e. the above bijection also
holds if one replaces abelian varieties by tori), 
and he points out in the proof
that Chow's original proof (for abelian varieties) also works for tori.
As is well known, Chow's theorem (also see \cite[Chapter
II, Theorem 5]{lang:av}) is proved under an old fashion of Weil's
foundation.
Thus, it is desirable to have a 
modern proof using the
language of schemes for Chow's theorem and its analogue for tori.
Brian Conrad \cite[Theorem 3.19]{conrad:Kktrace} gives 
a modern proof of the original Chow theorem.
The central idea is Grothendieck's faithfully flat
descent.

Grothendieck's descent theory has been a very powerful tool of
algebraic geometry.
The standard reference is SGA 1~\cite{SGA1}. The reader can also find the
exposition in some books or articles working with moduli spaces or
\'etale cohomology, for example, 
Milne~\cite[Chapter 1, Section 2]{milne:ec}, 
Freitag and Kiehl \cite[Appendix A]{freitag-kiehl}, 
Bosch, L\"utkebohmert, and 
Raynaud~\cite[Chapter 6]{bosch-luetkebohmert-raynaud} and 
B.~Conrad~\cite[Section 3]{conrad:Kktrace}.   
The faithfully flat descent 
is a very clean formulation which reorganizes both
the classical Galois descent and the purely inseparable descent
through derivations over fields in one unified way (regardless the
explicit structure of the flat base in question).  
More powerfully, this simple formulation works for arbitrary base
schemes, so it is far beyond the combination of both 
separable and inseparable descent over fields. 

The idea of Conrad's proof of Chow's theorem is pursued further 
in this article. 
Indeed, using the similar idea, we generalize Chow's theorem to 
semi-abelian varieties.
This includes the case of tori which is related to  
Theorem~\ref{1.1} by our discussion.
We refer to Section~\ref{sec:chow.2} for the definition of 
semi-abelian varieties. 

\begin{thm}\label{1.2}
  Let $X$ and $Y$ be two semi-abelian varieties over a field $k$, and
  let $K$ be a primary field extension of $k$. Then the monomorphism
  of $\Z$-modules 
  \begin{equation}
    \label{eq:1.1}
    \Hom_k(X,Y)\to \Hom_K (X_K, Y_K) 
  \end{equation}
is bijective.
\end{thm}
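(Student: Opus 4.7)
My approach adapts B.~Conrad's modern proof of Chow's theorem for abelian varieties, based on Grothendieck's faithfully flat descent, to the semi-abelian setting. Injectivity of \eqref{eq:1.1} is immediate from the faithful flatness of $K/k$. For surjectivity, take $f \in \Hom_K(X_K, Y_K)$ and apply the descent criterion: $f$ arises from a $k$-morphism iff $p_1^*(f) = p_2^*(f)$ in $\Hom_R(X_R, Y_R)$, where $R := K \otimes_k K$ and $p_1, p_2 \colon K \to R$ are the two canonical inclusions. Setting $\psi := p_1^*(f) - p_2^*(f)$, the goal becomes $\psi = 0$, and one already knows that $\Delta^*(\psi) = 0$ where $\Delta \colon R \to K$ is multiplication (a common retraction of $p_1$ and $p_2$).

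The key input from the primary hypothesis is that $R = K \otimes_k K$ has a unique minimal prime, so $R_{\mathrm{red}}$ is an integral domain and $\Spec R$ is irreducible. I would invoke two rigidity principles: (a) homomorphisms between semi-abelian $R$-schemes pulled back from $k$ are invariant under the nilpotent thickening $R \onto R_{\mathrm{red}}$, because the Hom functors for tori and for abelian schemes are representable by separated \'etale group schemes; (b) on the connected base $\Spec R_{\mathrm{red}}$, two sections of a separated \'etale group scheme that agree at a single point must agree globally.

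The core argument reduces $\psi = 0$ to the vanishing of its toric and abelian parts. Since the canonical exact sequence $0 \to T_X \to X \to A_X \to 0$ base-changes to $R$, the morphism $\psi$ restricts to $\psi_T \colon T_{X,R} \to T_{Y,R}$ (the composition $T_{X,R} \to Y_R \onto A_{Y,R}$ is a hom from a torus to an abelian scheme and hence vanishes) and descends to $\psi_A \colon A_{X,R} \to A_{Y,R}$. If both $\psi_T = 0$ and $\psi_A = 0$, then $\psi$ factors as $X_R \onto A_{X,R} \to Y_R$ and further lands in $T_{Y,R}$, producing a map $A_{X,R} \to T_{Y,R}$ which vanishes for the same reason; hence $\psi = 0$. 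Applying principle (a) to reduce from $R$ to $R_{\mathrm{red}}$, and then principle (b) to pass from $\Delta^*(\psi_T) = \Delta^*(\psi_A) = 0$ at the diagonal point to global vanishing, one obtains $\psi_T = \psi_A = 0$, and thus $\psi = 0$.

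The hard part will be justifying the rigidity principles (a) and (b) carefully in this setting, especially because $R = K \otimes_k K$ need not be Noetherian. I would rely on the standard representability of $\underline{\Hom}$ by separated \'etale group schemes for tori and for abelian schemes, both of which are insensitive to nilpotent thickenings and rigid on connected bases; decomposing $\psi$ via the toric-abelian exact sequence is precisely what avoids needing a representability statement for $\underline{\Hom}(X_R, Y_R)$ itself in the semi-abelian case.
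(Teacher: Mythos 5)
Your overall architecture is the same as the paper's first proof: reduce by faithfully flat descent to showing $p_1^*(f)=p_2^*(f)$ over $R=K\otimes_k K$, use that $\Spec R$ is irreducible (hence connected) because $K/k$ is primary, and exploit that the two pullbacks agree at the diagonal point. Where you genuinely diverge is the propagation step. The paper never decomposes $X$ and $Y$ into toric and abelian parts: it restricts $p_1^*(f)$ and $p_2^*(f)$ to the $\ell^n$-torsion subgroup schemes $X_R[\ell^n]$ (for $\ell$ different from the characteristic), which are finite \'etale, applies rigidity of \'etale morphisms over the connected base to get $p_1^*(f)[\ell^n]=p_2^*(f)[\ell^n]$ for all $n$, and concludes by Zariski density of the torsion (these subschemes are pulled back from $k$, so their scheme-theoretic density in $X_R$ survives the flat base change even though $R$ is non-reduced and non-Noetherian). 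This buys a proof with no representability of $\underline{\Hom}$ functors, no toric--abelian d\'evissage, and no separate treatment of $R$ versus $R_{\mathrm{red}}$. (The paper's second proof is simpler still: the restriction of $f$ to each $X[\ell^n]_K$ is defined over $k$ because $\underline{\Hom}$ of finite \'etale group schemes is a finite \'etale $k$-scheme and $K/k$ is primary, and then a density lemma descends $f$ itself.)

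Your route can probably be completed, but as written its key inputs are not secured, and one of them is stated incorrectly. First, $\underline{\Hom}$ of abelian schemes is \emph{not} representable by a separated \'etale group scheme: it is unramified over the base but generally not flat (the fibral rank of $\Hom$ can jump, e.g.\ at fibres acquiring extra endomorphisms), so principles (a) and (b) as stated rest on a false claim. They are repairable, since unramified plus separated suffices (the locus where a section meets the zero section is open and closed), but even representability over the non-Noetherian, non-reduced ring $K\otimes_k K$ is a nontrivial input usually established over Noetherian bases. Second, the d\'evissage uses twice that homomorphisms from a torus to an abelian scheme, and from an abelian scheme to a torus, vanish over the possibly non-reduced base $R$, not merely over fields; this needs an argument. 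Third, the nilradical of $K\otimes_k K$ need not be nilpotent, so ``invariance under nilpotent thickening'' does not literally apply; for the toric part one can invoke topological invariance of the \'etale site under the universal homeomorphism $\Spec R_{\mathrm{red}}\to\Spec R$, but for the abelian part one needs a limit/finite-presentation argument or a stronger rigidity statement. You flag these as ``the hard part,'' but they are precisely what the paper's torsion/\'etale-rigidity step is designed to avoid, and I would replace the d\'evissage by that step.
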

       
We actually give two proofs of this theorem. As mentioned, the first
proof utilizes the flat descent. Our second proof is more elementary; it
does not rely on the flat descent. Thus, we add another two (or one
depending on how one counts) proofs of Theorem~\ref{1.1} to 
those given by Borel, Springer, 
Tate, Ono and Tits. We remark that Theorem~\ref{1.1} is equivalent to
Theorem~\ref{1.2} for tori; see Proposition~\ref{proofs.1}.




We illustrate how Theorem~\ref{1.1} is related to integral
representations of finite groups, the inverse Galois problem and 
Noether's problem. These are 
interesting research topics and remain very active even up to date.
Let $\Gamma_k:=\Gal(k_s/k)$ be the absolute Galois group of $k$. Using
Theorem~\ref{1.1}, the functor 
\begin{equation}
  \label{eq:1}
  \text{a $k$-torus $T$} \ \mapsto X(T):=\Hom_{k_s}( 
  T_{k_s},{\Gm}_{k_s})
\end{equation}
gives rise to an anti-equivalence of categories
between that of algebraic $k$-tori and that of
finitely generated free $\Z$-modules together with a continuous
action of $\Gamma_k$, or simply $\Z\Gamma_k$-lattices. 
The inverse functor is given by 
$M\mapsto \Spec k_s[M]^{\Gamma_k}$.
If $K/k$ is a finite Galois extension with
Galois group $G:=\Gal(K/k)$, then this functor induces a bijection
\begin{equation}
  \label{eq:1.2}
  \left\{ \ \parbox{1.4in}{isomorphism classes of $k$-tori splitting
  over $K$} \ \right \} \isoto \left\{\,
  \parbox{1.2in}{isomorphism classes of $\Z
      G$-lattices}\, \right \}. 
\end{equation}
Thus, classifying $k$-tori can be divided into two steps:
\begin{itemize}
\item [(1)] Classify all finite groups $G$ which appear as quotients of
  $\Gamma_k$.
\item [(2)] Classify integral representations of $G$.
\end{itemize}

It is known that the symmetric group $S_n$ or the alternating group
$A_n$ occurs as a Galois group over $\Q$ \cite[Chapter 4]{serre:Gal}.
A theorem of Shafarevich states that if $k$ is a global field and $G$
is a finite solvable group, then there exists a finite Galois
extension $K/k$ with $\Gal(K/k)$ isomorphic to $G$; see 
\cite[Theorem 9.6.1]{neukirsch-schmidt-wingberg:2ed} (or Theorem 9.5.1 in
the 1st edition). Zywina \cite{zywina:psl2} 
proves that the finite simple group 
$\rm PSL_2(\Fp)$ for $p\ge 5$ occurs as a Galois group over $\Q$. 
We remind the reader the book by Jensen, Ledet and Yui 
\cite{jensen-ladet-yui}
where one can find 
many explicit examples of generic polynomials for small or medium
groups. This  
book also provides a convenient and detailed introduction 
to Galois theory and a nice exposition of Saltman's paper
\cite{saltman:1982}. We should also mention some published books
related to the subject, for example, those of Serre \cite{serre:Gal}, 
V\"olklein, 
and Malle and Matzat. 

Noether's problem is the most prominent 
and famous problem in the inverse Galois problem. 
For a finite group $G$, consider the field extension 
$k(G):=k(x_g|g\in G)^G$ of a field
$k$, where the action of $G$ on the finite set $\{x_g|g\in G\}$ of
indeterminates  is defined by $g\cdot x_h=x_{gh}$ for all
$g,h\in G$. Noether's problem asks whether $k(G)$ is rational (=purely
transcendental) over $k$.  
A standard survey of Noether's problem 
is Swan's paper~\cite{swan:noether}. 
For a more recent survey, see Kersten \cite{kersten}.
Lenstra \cite{lenstra:noether} gives a complete solution for the case
of finite abelian groups. 
D.~Saltman \cite{saltman:brauer_1984} introduces the 
unramified Brauer group $\Br_{\rm ur}(k(G))$ of $k(G)$, 
which paves a way  to construct
counterexamples to Noether's problem. If $k(G)$ is rational over $k$,
then $\Br_{\rm ur}(k(G))$ is trivial. Saltman
produces an example of a group $G$ of order $p^9$ where $p$ is a prime
number different from ${\rm char\,} k$ such that 
$\Br_{\rm ur}(k(G))$ is
non-trivial. F.~A.~Bogomolov \cite{bogomolov:brauer} gives 
an explicit formula for the unramified Brauer group and produces a similar
example, with $G$ of order $p^6$. 
The notion of Saltman's unramified Brauer group 
is extended to the higher degree unramified cohomology
groups by Colliot-Th\'el\`ene and Ojanguren~\cite{CT-Ojanguren:1989}. 
For recent development of Noether's
problem, we refer to works of A.~Hoshi, 
M.-C. Kang, H.~Kitayama, and
A.~Yamasaki~ and others 
\cite{chu-hoshi-hu-kang, hoshi:noether2015, kang:adv2006,kang:imrn2009,
  kang:adv2011,kang:cyclic, Ki-Y, Ki:2010, plans, yamasaki:2010}. 
Surely, the reader can also easily find more in 
the literature. We also refer to the
excellent survey paper by B.~Kunyavskii~\cite{kunyavskii:30y} for
developments of more research topics related to algebraic tori.

We conclude this paper with the best bound for the degrees of 
splitting fields of tori.
\def\Max{{\rm Max}}
\begin{prop}(Corollary~\ref{spl.4})
For any $d\ge 1$ and any number field $k$, there exist infinitely many
$d$-dimensional tori $T$ over $k$ such that $[k_T:k]=\Max(d,\Q)$, where
$k_T$ is the (minimal) splitting field of $T$ and $\Max(d,\Q)$ is the
maximal order of finite subgroups of $\GL_d(\Q)$. 
\end{prop}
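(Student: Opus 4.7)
The plan is to translate the statement into the inverse Galois problem using the anti-equivalence of categories \eqref{eq:1.2}. Producing a $d$-dimensional $k$-torus $T$ with $[k_T:k]=n$ amounts to producing a pair $(K,\rho)$ consisting of a Galois extension $K/k$ of degree $n$ together with a faithful integral representation $\rho\colon\Gal(K/k)\hookrightarrow\GL_d(\Z)$; the associated torus $T=\Spec k_s[M]^{\Gamma_k}$, where $M=\Z^d$ is acted on through $\rho$, then has splitting field exactly $K$. Consequently the assertion reduces to exhibiting, for any number field $k$, infinitely many Galois extensions $K/k$ whose Galois group is isomorphic to some fixed finite subgroup $G_0\subseteq\GL_d(\Z)$ of maximal order $\Max(d,\Q)$.

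By Minkowski's theorem every finite subgroup of $\GL_d(\Q)$ is conjugate to a subgroup of $\GL_d(\Z)$, so such a $G_0$ does exist and acts faithfully on $M=\Z^d$. Its isomorphism type is classically known: for most $d$ the maximum is realized by the hyperoctahedral group $W(B_d)=(\Z/2\Z)^d\rtimes S_d$ of order $2^d d!$, and in the small exceptional dimensions by an exceptional Weyl group. The main step is then to realize $G_0$ as $\Gal(K/k)$ in infinitely many essentially different ways over $k$: for $G_0=W(B_d)$ I would take infinitely many linearly disjoint $S_d$-extensions $L/k$ (produced by Hilbert irreducibility from a polynomial with Galois group $S_d$) and adjoin square roots of a $\Gal(L/k)$-orbit of $d$ elements, verifying via Kummer theory that a generic choice yields Galois group equal to the full wreath product $S_2\wr S_d=W(B_d)$; for the exceptional Weyl groups I would invoke known constructions from the inverse Galois literature.

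For each such extension $K/k$ the faithful representation of $\Gal(K/k)$ on $M$ gives a $\Z\Gamma_k$-lattice and, via the inverse functor of \eqref{eq:1.2}, a $d$-dimensional $k$-torus $T$ with splitting field $K$, so $[k_T:k]=|G_0|=\Max(d,\Q)$; distinct extensions $K/k$ produce tori with distinct splitting fields and hence non-isomorphic tori. The principal obstacle is the inverse Galois step, particularly for the exceptional dimensions, but in each of those finitely many cases the relevant Weyl group is already known to be a Galois group over $\Q$ (hence over any number field $k$ by base change) with infinitely many realizations, so the argument proceeds uniformly in $d$.
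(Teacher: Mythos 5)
Your reduction of the statement to the inverse Galois problem --- a $d$-dimensional $k$-torus with $[k_T:k]=\Max(d,\Q)$ is the same thing as a Galois extension $K/k$ whose group embeds as a maximal-order finite subgroup of $\GL_d(\Z)$, via \eqref{eq:1.2} and the fact that finite subgroups of $\GL_d(\Q)$ are conjugate into $\GL_d(\Z)$ --- is exactly how the paper deduces Corollary~\ref{spl.4} from Theorem~\ref{spl.3}, and that part of your argument is fine. The divergence, and the gap, is in how you realize the group. For the exceptional dimensions $d\in\{2,4,6,7,8,9,10\}$ you only ``invoke known constructions from the inverse Galois literature,'' asserting that the relevant group is a Galois group over $\Q$ with infinitely many realizations; but this assertion is precisely where the content of Theorem~\ref{spl.3} lies, and it is left unproved (note also that for $d=6,9,10$ the maximal subgroup is not a single exceptional Weyl group but $\langle W(E_6),-I\rangle$, $W(E_8)\times W(A_1)$, $W(E_8)\times W(G_2)$, so even ``known'' single-group realizations would need an extra disjointness argument for the product). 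The paper's argument at this point is uniform and short: every group $G$ in Table 1 acts on $k(x_1,\dots,x_d)$ with purely transcendental invariant field, by Chevalley's theorem \cite{chevalley:weyl} as packaged in \cite[Proposition 6]{BDEPS} (which also covers the $d=6$ case), and Hilbert irreducibility over the Hilbertian field $k$ itself then yields infinitely many $G$-extensions of $k$. The same mechanism disposes of your $W(B_d)$ case as well (the invariant field is generated by the elementary symmetric functions of $x_1^2,\dots,x_d^2$), so the Kummer-theoretic verification that a ``generic choice'' of square roots gives the full wreath product --- which you state but do not carry out --- is not needed.

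A second, concrete flaw is the step ``already known to be a Galois group over $\Q$ (hence over any number field $k$ by base change).'' Base change does not preserve the Galois group: if $K/\Q$ has group $G$, then $Kk/k$ has group $\Gal(K/K\cap k)$, which is all of $G$ only when $K\cap k=\Q$; moreover infinitely many $G$-extensions of $\Q$ need not produce infinitely many of $k$, since infinitely many of them could contain one fixed nontrivial subfield of $k$. This can be repaired (choose the realizations over $\Q$ pairwise linearly disjoint, so each nontrivial subfield of $k$ lies in at most one of them, and then all but finitely many composita $Kk/k$ have group $G$), but the repair has to be made explicit. The paper avoids the issue entirely by working over $k$ from the start, which is also what gives the stronger statement of Theorem~\ref{spl.3} for an arbitrary Hilbertian field of characteristic zero.
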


The paper is organized as follows. Section~\ref{sec:chact-diag-groups} 
contains minimal
preliminaries of diagonalizable groups and proofs of 
Theorem~\ref{1.1} due to A.~Borel and T.~Ono.
Theorem~\ref{1.2} is proved in Section~\ref{sec:chow}; 
we also give another proof of 
Theorem~\ref{1.2} and hence that of Theorem~\ref{1.1}. 
In Section~\ref{sec:spl} we
study bounds of splitting fields of algebraic tori.
 

\section{Two proofs of Theorem~\ref{1.1}}
\label{sec:chact-diag-groups}

\subsection{Characters and diagonalizable groups}\label{sec:ch.1}
In this subsection we shall present a proof of Theorem~\ref{1.1} due to
Armand Borel \cite{borel:lag}. 
A basic result is that any set of characters is linearly independent
in the following sense.
 
\begin{lemma}\label{ch.1}
  Let $H$ be an abstract group, $k$ any field, and let $X$ be the set of
  all homomorphisms $H\to k^\times$. Then $X$ is $k$-linearly
  independent as a subset in the $k$-vector space $\calC(H,k)$ of
  $k$-valued 
  functions on $H$. That is, for any distinct characters
  $\chi_1,\dots, \chi_n$ and elements $a_1, \dots, a_n\in k$, then 
  \begin{equation}
    \label{eq:ch.1}
    a_1 \chi_1+\dots + a_n \chi_n=0 \ \text{in $\calC(H,k)$} \implies
    a_1=\dots=a_n=0.
  \end{equation}
\end{lemma}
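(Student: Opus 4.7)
The plan is to prove this classical Dedekind--Artin style linear independence of characters by induction on the number $n$ of distinct characters. The base case $n=1$ is trivial: if $a_1 \chi_1 = 0$ in $\calC(H,k)$, then evaluating at any $h \in H$ yields $a_1 \chi_1(h) = 0$, and since $\chi_1(h) \in k^\times$ we get $a_1 = 0$.

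For the inductive step, suppose the statement holds for any collection of fewer than $n$ distinct characters, and suppose for contradiction that we have a nontrivial relation $a_1 \chi_1 + \cdots + a_n \chi_n = 0$ with distinct $\chi_i$. By dropping terms with $a_i = 0$ and relabeling, we may assume all $a_i$ are nonzero (if the number of nonzero terms drops below $n$, we contradict the inductive hypothesis directly). Since $\chi_1 \neq \chi_n$, pick $h_0 \in H$ with $\chi_1(h_0) \neq \chi_n(h_0)$. Evaluating the relation at $h_0 h$ for arbitrary $h \in H$ gives
\[
\sum_{i=1}^n a_i \chi_i(h_0)\, \chi_i(h) = 0,
\]
i.e.\ $\sum_i a_i \chi_i(h_0) \chi_i = 0$ in $\calC(H,k)$. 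Subtracting $\chi_n(h_0)$ times the original relation eliminates the $n$-th term and produces
\[
\sum_{i=1}^{n-1} a_i \bigl(\chi_i(h_0) - \chi_n(h_0)\bigr) \chi_i = 0.
\]
Applying the inductive hypothesis to this relation among $n-1$ distinct characters forces each coefficient to vanish. Taking $i=1$ and using $\chi_1(h_0) - \chi_n(h_0) \neq 0$ gives $a_1 = 0$, contradicting our assumption.

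The argument is essentially frictionless; there is no real obstacle beyond organizing the induction and the ``shift by $h_0$'' trick that decouples one character from the rest. The only mild subtlety is making sure to reduce to the case where all $a_i$ are nonzero before picking the pair of characters that separate at $h_0$, so that the inductive step applies to a genuinely smaller relation of \emph{distinct} characters.
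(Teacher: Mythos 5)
Your proof is correct: it is the classical Dedekind--Artin induction with the ``shift by $h_0$'' trick, and the reduction to the case where all coefficients are nonzero is handled properly. The paper itself offers no argument beyond citing Borel's Lemma 8.1, whose proof is essentially this same induction, so your write-up simply supplies in full the standard proof the paper delegates to the reference.
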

\begin{proof}
See \cite[Lemma 8.1]{borel:lag}.  
\end{proof}\ 

Let $G$ be a linear algebraic group over a field $k$. 
Put $K:=\bar k$. 
Let $X(G):=\Hom_{K\text{-gp}}(G,\Gm)$ denote the group of all
characters, which is a finitely generated abelian group. The 
subgroup of $k$-rational characters is denoted by $X(G)_k$.

\begin{defn}\label{ch.2}
  (1) We say that $G$ is {\it diagonalizable} if the coordinate ring 
  $K[G]$ is spanned by $X(G)$ over $K$. 

  (2) We say that a diagonalizable group $G$ splits over $k$ if the
  coordinate ring $k[G]$ is spanned by $X(G)_k$ over $k$.      
\end{defn}

If $H$ is an abstract group, 
then the group algebra $K[H]$ of $H$ over $K$ admits a 
natural structure of Hopf
algebra, with the co-multiplication $\Delta:K[H]\to K[H]\otimes_K
K[H]$ defined by $\Delta(h)=h\otimes h$.
By Lemma~\ref{ch.1}, the abelian group $X(G)$ is a linearly
independent subset in
$K[G]$. So if $G$ is diagonalizable, then $K[G]$ is equal to 
the group algebra $K[X(G)]$ of the abelian group $X(G)$. Moreover,
this equality respects the Hopf algebra structures of $K[G]$ and of
$K[X(G)]$. 
Particularly, $G$ is commutative if $G$ is diagonalizable.

It is clear from the definition that an algebraic torus is precisely a 
connected diagonalizable algebraic group. We recall basic properties
of diagonalizable groups. Let ${\bf D}_n\subset \GL_n$, for $n\ge 1$,
denote the diagonal split torus of dimension $n$.

\begin{prop}\label{ch.3}
  Let $G$ be a linear algebraic group over $K$. The following
  statements are equivalent:

  \begin{enumerate}
  \item $G$ is diagonalizable.
  \item $G$ is isomorphic to a subgroup of ${\bf D}_n$ for some $n\ge 1$.
  \item For each rational representation $\pi: G\to \GL_n$, the
    subgroup $\pi(G)$ is conjugate to a subgroup of ${\bf D}_n$.

  \item $G$ contains a dense commutative subgroup consisting of
    semi-simple elements. 
  \end{enumerate}
\end{prop}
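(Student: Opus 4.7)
The plan is to prove the cycle $(2) \Rightarrow (1) \Rightarrow (3) \Rightarrow (2)$ together with the equivalence $(2) \Leftrightarrow (4)$, so that all four conditions collapse to one. The first implication is essentially formal: a closed immersion $G \hookrightarrow {\bf D}_n$ yields a surjection of Hopf algebras $K[{\bf D}_n] \twoheadrightarrow K[G]$, and since $K[{\bf D}_n]=K[X({\bf D}_n)]$ is visibly spanned by its characters, the restrictions of these characters span $K[G]$ over $K$, giving (1).

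For $(1)\Rightarrow (3)$, I would use the weight space decomposition. A rational representation $\pi\colon G\to \GL(V)$ with $V$ finite-dimensional is the same as a $K[G]$-comodule structure $\rho\colon V\to V\otimes_K K[G]$. Under hypothesis (1) combined with Lemma~\ref{ch.1}, $K[G]=\bigoplus_{\chi\in X(G)} K\chi$, and decomposing $\rho$ along this direct sum produces
\[
V=\bigoplus_{\chi\in X(G)} V_\chi, \qquad V_\chi=\{v\in V:\pi(g)v=\chi(g)v\ \text{for all } g\in G\}.
\]
A basis adapted to this decomposition simultaneously diagonalizes $\pi(G)$, so $\pi(G)$ lies in the diagonal torus of $\GL(V)$ up to conjugation. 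For $(3)\Rightarrow(2)$, one applies (3) to the tautological closed immersion $G\hookrightarrow \GL_m$ (which exists by definition of \emph{linear} algebraic group); after a conjugation $G$ becomes a closed subgroup of ${\bf D}_m$.

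The equivalence $(2)\Leftrightarrow (4)$ is the part requiring the most care. The forward direction is painless: ${\bf D}_n$ is commutative, its $K$-points are all semisimple, and $G\subset {\bf D}_n$ is itself a dense commutative subgroup of semisimple elements (take $H=G(K)$). The reverse direction $(4)\Rightarrow(2)$ is where I expect the real work, and is the main obstacle. The strategy is to fix a faithful finite-dimensional representation $G\hookrightarrow \GL(V)$ and exploit the classical linear algebra fact that over the algebraically closed field $K$, a commuting family of semisimple (i.e.\ diagonalizable) operators admits a simultaneous eigenbasis. Applied to the dense commutative subgroup $H$ of semisimple elements furnished by (4), this gives a basis of $V$ in which every element of $H$ is diagonal; equivalently, $H\subset T$ where $T\cong {\bf D}_{\dim V}$ is the diagonal torus in $\GL(V)$ with respect to that basis. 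Since $T$ is closed in $\GL(V)$ and $H$ is dense in $G$, the whole group $G$ is contained in $T$, yielding (2). The subtle point to verify is that the existence of a simultaneous eigenbasis for $H$ really is a consequence of the pointwise semisimplicity of its elements together with commutativity, which is a standard but essential input from linear algebra that I would simply cite.
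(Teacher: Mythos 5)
Your proposal is correct, and it is essentially the argument of the source the paper cites for this proposition (Borel, \emph{Linear Algebraic Groups}, Proposition~8.4): the paper gives no independent proof, and your cycle of implications --- Hopf-algebra surjection for $(2)\Rightarrow(1)$, the comodule weight-space decomposition $V=\bigoplus_\chi V_\chi$ for $(1)\Rightarrow(3)$, a faithful representation for $(3)\Rightarrow(2)$, and simultaneous diagonalization of a commuting family of semisimple elements plus density for $(4)\Rightarrow(2)$ --- is exactly the standard route. No gaps; the linear-algebra fact you defer to (a commuting family of diagonalizable operators over an algebraically closed field has a common eigenbasis) is indeed the only external input needed.
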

\begin{proof}
  See \cite[Proposition 8.4]{borel:lag}.
\end{proof}


\begin{thm}\label{ch.5}
  Every diagonalizable $k$-group $G$ splits over $k_s$. 
\end{thm}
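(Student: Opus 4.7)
The plan is to study $X(G)$ as the group-like elements of the Hopf algebra $\bar k[G]$, to establish continuity of the induced Galois action, and then to descend the resulting finite separable extension to a genuine splitting field by a derivation argument in the imperfect-base case.

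To set things up: by Lemma~\ref{ch.1} together with the diagonalizability hypothesis, $X(G)$ is simultaneously the set of group-like elements of $\bar k[G]$ (i.e., $\{f\in\bar k[G]^{\times}:\Delta(f)=f\otimes f\}$) and a $\bar k$-basis of this Hopf algebra. Since $\bar k[G]$ is a finitely generated $\bar k$-algebra, $X(G)$ is a finitely generated abelian group. The absolute Galois group $\Gamma_k=\Gal(k_s/k)\cong\Aut(\bar k/k)$ acts semi-linearly on $\bar k[G]=k[G]\otimes_k\bar k$; preservation of the $k$-rational Hopf structure forces this action to permute the group-like elements, inducing a $\Gamma_k$-action on $X(G)$.

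I would next check continuity. Fix a $k$-basis of $k[G]$ and write each character in terms of it: every $\chi\in X(G)$ has finitely many nonzero coefficients, all lying in a finitely generated algebraic (hence finite) subextension of $\bar k/k$, so the $\Gamma_k$-orbit of $\chi$ is finite. Applied to a finite generating set of $X(G)$, the common stabilizer $U\subset\Gamma_k$ is open and acts trivially on $X(G)$. Set $L:=k_s^U$, a finite separable extension of $k$, and let $L_0\subset\bar k$ be the compositum of the coefficient fields of a chosen finite generating set; then $L_0\cap k_s=L$, the extension $L_0/L$ is purely inseparable, and every $\chi\in X(G)$ lies in $L_0[G]$.

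The hard step, and the only one requiring work when $k$ is imperfect, is to upgrade ``$\chi\in L_0[G]$'' to ``$\chi\in L[G]$''. Here I would invoke the characterization of $L$ as the joint kernel inside $L_0$ of all $L$-linear derivations $L_0\to L_0$, which is valid because $L_0/L$ is a finite purely inseparable extension. Any such derivation $D$ extends $k[G]$-linearly to a derivation of $L_0[G]$ that commutes with the coproduct $\Delta$; applied to the group-like identity $\Delta(\chi)=\chi\otimes\chi$, it yields $\Delta(D\chi)=D\chi\otimes\chi+\chi\otimes D\chi$, so $D\chi$ is a $\chi$-primitive element of $\bar k[G]=\bar k[X(G)]$. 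A direct bookkeeping in the group algebra $\bar k[X(G)]$ shows the only $\chi$-primitive element is zero, hence $D\chi=0$ for every such $D$, and so the coefficients of $\chi$ lie in $L$. Therefore $X(G)\subset L[G]\subset k_s[G]$, and the proof is completed by combining Lemma~\ref{ch.1} with the faithful flatness of $\bar k/k_s$: the $k_s$-span of $X(G)$ inside $k_s[G]$ must equal $k_s[G]$ since after tensoring with $\bar k$ it equals $\bar k[G]$. Thus $G$ splits over $k_s$.
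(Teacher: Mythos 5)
Your route is genuinely different from the paper's. The paper reproduces Borel's argument: choose a $k$-embedding $G\subset \GL_n$, use Proposition~\ref{ch.3} to get a dense commutative subgroup of semisimple elements in $G(k_s)$, diagonalize these simultaneously over $k_s$, and conclude that $G$ is $k_s$-conjugate into ${\bf D}_n$, every subgroup of which is split. You instead work entirely inside the Hopf algebra: characters as group-like elements, a continuity argument isolating a finite separable layer $L/k$ and a purely inseparable layer $L_0/L$, and a derivation computation ($D\chi$ is $\chi$-primitive, hence $0$ by expanding in the basis $X(G)$ of $\bar k[X(G)]$) to kill the inseparable layer. This is much closer in spirit to the Springer and Tits--Tate proofs mentioned in the introduction than to the proof the paper actually gives. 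Your continuity step, the identification $L=k_s\cap L_0$ with $L_0/L$ purely inseparable, the primitivity computation, and the final faithfully-flat span argument are all correct, so the overall strategy is sound and buys a proof independent of the semisimplicity/density input of Proposition~\ref{ch.3}.

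There is, however, one misstated step: the claim that the joint kernel in $L_0$ of all $L$-linear derivations $L_0\to L_0$ equals $L$ is false once the exponent of the purely inseparable extension $L_0/L$ exceeds one. Every derivation kills $p$-th powers, so the joint kernel is $L\,L_0^p$; for instance if $L_0=L(a)$ with $a^{p^2}\in L$ but $a^p\notin L$, then $a^p$ lies in the kernel of every $D\in \mathrm{Der}_L(L_0)$ yet not in $L$. The conclusion survives with a small repair: your computation shows that every coefficient of every character is annihilated by every $D\in\mathrm{Der}_L(L_0)$, hence lies in $L\,L_0^p$; since the coefficients of the chosen generating characters generate $L_0$ (and $L\subset L_0$), this yields $L_0=L\,L_0^p$, and iterating gives $L_0=L\,L_0^{p^m}$ for all $m$, which together with finiteness of $L_0/L$ forces $L_0=L$. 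Equivalently, one may induct on the exponent, replacing $L_0$ by $L\,L_0^p$ at each stage, or use Hasse--Schmidt (iterative) derivations. You should state this correctly, since exponent-one descent is precisely the point where derivation arguments for purely inseparable extensions typically go wrong.
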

\begin{proof}
  Choose a $k$-embedding $G\subset \GL_n$. By Proposition~\ref{ch.3},
  $G(k_s)$ contains a dense commutative subgroup $S$ consisting of
  semi-simple elements. As $S$ is commutative and every element $s$ in
  $S$ is diagonalizable in $\GL_n(k_s)$, we can diagonalize
  simultaneously the matrices $s$ for all $s\in S$. 
  That is, there is an element $g\in
  \GL_n(k_s)$ such that $g S g^{-1}\subset {\bf D}_n(k_s)$. 
  Since $S$ is dense, the inner automorphism $\Int (g)$ sends $G$ 
  into a subgroup of ${\bf D_n}$. Therefore, $G$ is $k_s$-isomorphic
  to a subgroup of ${\bf D_n}$. Note that every subgroup of ${\bf D_n}$
  splits over $k$ and hence $G\otimes_k k_s$ is a split torus. \qed
\end{proof}

Theorem~\ref{1.1} follows from Theorem~\ref{ch.5}, because every
algebraic torus is a diagonalizable group. 
We remark that Proposition~\ref{ch.3} (4) is 
also known due to Rosenlicht who uses it to show  
that every algebraic $k$-torus is unirational; see \cite[Proposition
10]{rosenlicht:1957}.


\subsection{Ono's proof}\label{sec:ch.2}
We now present Ono's proof of Theorem~\ref{1.1} based on
Theorem~\ref{1.2} for tori. 
Let $T$ be an algebraic torus over $k$.
It suffices to show that any character $\chi\in X(T)_{\bar
  k}=\Hom_{\bar k}(T,\Gm)$ is defined over $k_s$.

Since $\bar k/k_s$ is primary, applying Theorem~\ref{1.2} to $A=T$ and
$B=\Gm$, we have an isomorphism
\[ \Hom_{k_s}(T,\Gm)\isoto \Hom_{\bar k}(T,\Gm). \]
Thus, every character is defined over $k_s$.  \qed



We observe that Theorem~\ref{1.1} and Theorem~{1.2} for tori are
equivalent. 

\begin{prop}\label{proofs.1}
  The following two statements are equivalent.

  \begin{enumerate}
  \item [(a)] Every $k$-torus splits over $k_s$.
  \item [(b)] For any two $k$-tori $X$ and $Y$ and any primary field
    extension $K/k$, we have the bijection $\Hom_k(X,Y)\simeq
    \Hom_K(X_K,Y_K)$. 
  \end{enumerate}
\end{prop}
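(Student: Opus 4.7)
My plan splits into the two implications. For (b) $\Rightarrow$ (a), I would simply record the argument Ono gives immediately above for Theorem~\ref{1.1}: apply (b) to the primary extension $\bar k/k_s$ with $X=T$ and $Y=\Gm$ to obtain $\Hom_{k_s}(T_{k_s},\Gm_{k_s})\isoto \Hom_{\bar k}(T_{\bar k},\Gm_{\bar k})$, so that a $\Z$-basis of $X(T_{\bar k})$ already descends to $k_s$. The resulting $k_s$-morphism $T_{k_s}\to \Gm_{k_s}^d$ becomes an isomorphism after the faithfully flat base change $\bar k/k_s$, hence is already an isomorphism over $k_s$, showing that $T$ splits over $k_s$.

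For the more substantial direction (a) $\Rightarrow$ (b), I plan to use Galois descent. Given $k$-tori $X,Y$ and a primary extension $K/k$, assumption (a) furnishes a finite Galois extension $L/k$ inside $k_s$ over which both $X$ and $Y$ split; set $G:=\Gal(L/k)$. The first step is the field-theoretic claim that, because $K/k$ is primary and $L/k$ is finite Galois, $L\otimes_k K$ is a field which is Galois over $K$ with Galois group canonically identified with $G$ via restriction. Granted this, $X_K$ and $Y_K$ are both split by $LK:=L\otimes_k K$, and the anti-equivalence \eqref{eq:1.2} (applied to $k$-tori split by $L$ and, separately, to $K$-tori split by $LK$) yields
\[
\Hom_k(X,Y)\cong \Hom_{\Z G}\bigl(X(Y_L),\, X(X_L)\bigr),\qquad
\Hom_K(X_K,Y_K)\cong \Hom_{\Z G}\bigl(X(Y_{LK}),\, X(X_{LK})\bigr).
\]
Since character lattices are unchanged by further base extension beyond the splitting field, the natural maps $X(X_L)\to X(X_{LK})$ and $X(Y_L)\to X(Y_{LK})$ are $G$-equivariant isomorphisms, identifying the two Hom-groups; tracing through the identifications shows the resulting bijection is the canonical base change map \eqref{eq:1.1}.

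The main obstacle is the Galois-theoretic input about $L\otimes_k K$. I would verify it by writing $L=k(\alpha)$ via a primitive element with separable minimal polynomial $f\in k[x]$, so that $L\otimes_k K=K[x]/(f)$; any nontrivial factor of $f$ over $K$ would have coefficients symmetric in a subset of the roots of $f$ (which are separable over $k$), hence in the separable closure of $k$ inside $K$, which by the primary hypothesis equals $k$, contradicting the irreducibility of $f$ over $k$. Thus $L\otimes_k K$ is a field; being Galois follows since the $G$-action on $L$ extends $K$-linearly with fixed subfield $K$.
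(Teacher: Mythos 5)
Your proposal is correct and follows essentially the same route as the paper: (b)$\Rightarrow$(a) is exactly Ono's argument (apply the bijection to $X=T$, $Y=\Gm$ over $k_s$ for the primary extension $\bar k/k_s$), and (a)$\Rightarrow$(b) is the same Galois-descent computation, resting on the two facts the paper uses, namely that primarity makes the separable splitting extension linearly disjoint from $K$ (so the relevant Galois group is unchanged after base change) and that $\Hom$-groups between split tori are insensitive to further field extension. The only differences are presentational: you work with a finite splitting field $L$ and the $\Z G$-lattice description of $\Hom$ rather than taking $\Gamma_K$-invariants over $K_s$ and $k_sK$ as in \eqref{eq:proofs.1}, and you supply an explicit symmetric-function proof that $L\otimes_k K$ is a Galois field extension of $K$ with group $G$, a fact the paper simply asserts in the form $K\cap k_s=k$ and $\Gal(k_sK/K)\simeq\Gamma_k$.
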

\begin{proof}
  We have proven (b)$\implies$ (a) and now prove the other direction.
  Choose a separable closure $K_s$ of $K$ containing $k_s$. By (a), we
  have $\Hom_{k_s}(X_{k_s},Y_{k_s})\simeq
  \Hom_{K_s}(X_{K_s},Y_{K_s})=\Hom_{k_sK}(X_{k_sK},Y_{k_s K})$. As
  $K/k$ is primary, $K\cap k_s=k$ and $\Gal(k_sK/K)\simeq
  \Gamma_k=\Gal(k_s/k)$. Thus,
  \begin{equation}
    \label{eq:proofs.1}
    \begin{split}
      \Hom_K(X_K,Y_K)&=\Hom_{K_s}(X_{K_s},Y_{K_s})^{\Gamma_K}\\
      &=\Hom_{K
      k_s}(X_{Kk_s},Y_{Kk_s})^{\Gal(Kk_s/K)} \\
      &\simeq
      \Hom_{k_s}(X_{k_s},Y_{k_s})^{\Gamma_k}=\Hom_k(X,Y). 
      \quad \text{\qed} 
    \end{split}
  \end{equation}
\end{proof}

\section{Chow's theorem for semi-abelian varieties}
\label{sec:chow}

In this section we shall give a proof of Theorem~\ref{1.2}.
As mentioned in Section~\ref{sec:introduction}, 
the main ingredient is Grothendieck's
faithfully flat descent.

\subsection{Faithfully flat descent}
\label{sec:chow.1}

We recall some basic terminology needed to describe the flat descent.
\begin{defn}\label{chow.1}\ 

{\rm (1)} A ring homomorphism $A\to B$ of commutative rings 
  is said to be {\it flat} 
  if the functor $\otimes_A B: A\text{-Mod}\to
  B\text{-Mod}$ is exact.


{\rm (2)} A morphism $f:X\to Y$ of schemes 
   is said to be {\it flat} if for any point $x\in X$ with $y=f(x)$,
   the ring homomorphism $\calO_{Y,y}\to \calO_{X,x}$ of local rings
   is flat. We say $f$ is {\it faithfully flat} if it is flat and
   surjective. 

{\rm (3)} We say $f$ is {\it quasi-compact} if the pre-image 
  $f^{-1}(U)$ of every open affine subscheme $U$ of $Y$ is
  quasi-compact, that is, it is a finite union of open affine
  subschemes. 

{\rm (4)} A scheme $X$ is said to be {\it quasi-affine} if it is
quasi-compact and it is contained in an affine scheme. A morphism $f$
of schemes is said to be {\it quasi-affine} if it is quasi-compact and
the  pre-image 
  $f^{-1}(U)$ of every open affine subscheme $U$ of $Y$ is
  quasi-affine.

{\rm (5)} Let $Y$ be a Noetherian scheme. A morphism $f:X\to
Y$ of schemes of finite type is said to be {\it projective}
(resp.~ quasi-projective) if $X$ is isomorphic to a closed
(resp.~ locally closed) subscheme of the projective scheme $\bfP^N_Y$
for some positive integer $N$.     
\end{defn}

We first describe the flat descent for morphisms.
Let $p:S'\to S$ be a morphism of base schemes,
and let $X\to S$
be a morphism of schemes. For any integer $n>1$, 
write  $S^{(n)}:=S'\times_S
\dots \times_S S'$ ($n$ times), and $X^{(n)}:=X\times_S S^{(n)}$. 
Let $p_1,p_2: S'':=S^{(2)} \to S'$ be two projection maps. 

\begin{prop}\label{chow.2}
Let $p:S'\to S$ be a faithfully flat and quasi-compact 
morphism of base schemes.
Let $X$ and $Y$ be
two schemes over $S$ and let $f':X'\to Y'$ a morphism of schemes over 
$S'$. If $p_1^*(f')=p_2^*(f')$, then there is a unique morphism $f:X\to
Y$ over $S$ such that $f'=p^*(f)$.   
\end{prop}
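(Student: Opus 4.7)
The plan is to reduce, via affine charts, to a question in commutative algebra settled by Grothendieck's faithfully flat descent for modules.

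For uniqueness, suppose $f_1,f_2\colon X\to Y$ are two $S$-morphisms whose pullbacks along $p$ agree. Since $X' = X\times_S S'\to X$ is faithfully flat and hence surjective, $f_1$ and $f_2$ agree set-theoretically. Working locally on affine opens $\Spec B\subset X$ mapping into $\Spec C\subset Y$, the induced ring maps $C\to B$ become equal after $-\otimes_A A'$, hence coincide because $B\to B\otimes_A A'$ is injective (a standard consequence of faithful flatness of $A\to A'$).

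For existence we first reduce to the case where $S$, $S'$, $X$, $Y$ are all affine. Cover $Y$ by affine opens $\{V_j\}$. The preimages $W_j':=(f')^{-1}(V_j\times_S S')\subset X'$ satisfy $p_1^*(W_j')=p_2^*(W_j')$ by the cocycle hypothesis on $f'$, so by descent of open subschemes they come from opens $W_j$ covering $X$. Refining further by affine opens of $X$ and of $S$, and invoking the uniqueness just proved to glue the resulting local morphisms on overlaps, we are reduced to $S=\Spec A$, $S'=\Spec A'$, $X=\Spec B$, $Y=\Spec C$. In this affine setting, $f'$ corresponds to an $A$-algebra homomorphism $\phi'\colon C\to B\otimes_A A'$, and the cocycle condition reads $(\id_B\otimes i_1)\circ\phi'=(\id_B\otimes i_2)\circ\phi'$, where $i_1,i_2\colon A'\rightrightarrows A'\otimes_A A'$ are the two coproduct inclusions.

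The crux is then Grothendieck's equalizer sequence
\[
0\to B\to B\otimes_A A'\rightrightarrows B\otimes_A A'\otimes_A A',
\]
which is exact for any $A$-module $B$ when $A\to A'$ is faithfully flat. Exactness forces $\phi'(C)\subseteq B$, yielding an $A$-algebra homomorphism $\phi\colon C\to B$ with $\phi'=\phi\otimes_A A'$, and hence the desired $f\colon X\to Y$ with $p^*(f)=f'$. That $\phi$ is compatible with multiplication and unit is automatic from the injection $B\hookrightarrow B\otimes_A A'$.

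The chief obstacle is the reduction-to-affine step: one must coordinate the descent of open subschemes of $X'$ with the descent of morphisms on each chart, and use uniqueness on overlaps to glue the locally constructed $f$'s into a global morphism. Once the problem is reduced to affine schemes, the algebraic descent via the equalizer sequence is immediate.
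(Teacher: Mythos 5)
Your argument is correct. The paper itself gives no proof of this proposition, simply citing \cite[A.III.1 Lemma]{freitag-kiehl}, and your route---uniqueness via injectivity of $B\to B\otimes_A A'$, descent of the opens $(f')^{-1}(V_j\times_S S')$ to reduce to the affine case, and then exactness of the Amitsur equalizer $0\to B\to B\otimes_A A'\rightrightarrows B\otimes_A A'\otimes_A A'$---is precisely the standard proof of that cited lemma. The one point your sketch leaves implicit is where quasi-compactness is used: over an affine chart $\Spec A\subset S$ the preimage in $S'$ is only quasi-compact, so one replaces it by the disjoint union of finitely many affine opens (equivalently takes $A'$ to be the product of their coordinate rings) to arrive at the affine situation $S'=\Spec A'$; with that remark added, the argument is complete.
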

\begin{proof}
  See \cite[A.III.1 Lemma]{freitag-kiehl}.
\end{proof}\ 
 
We now describe the flat descent for objects. For any two integers
$1\le i<j\le 3$, 
denote by $p_{ij}:S^{(3)}\to S^{(2)}$ 
the projection map onto the $i$-th and
$j$-th components. 
Let $p^{n}_i:S^{(n)}\to S'$
denote the $i$-th projection map. Clearly, one has 
$p_1 p_{ij}=p^{3}_i$ and $p_2 p_{ij}=p^{3}_j$ in
$\Hom(S^{(3)},S')$. If $X'/S'$ is a scheme over $S'$ and
$\alpha:p_1^*(X')\to p_2^*(X')$ is a morphism over $S^{(2)}$, 
then the pull-back morphism $p_{ij}^*(\alpha)$
is a morphism 
\[ p_{ij}^*(\alpha):(p_i^{3})^ *(X')\to (p_j^{3})^*(X'). \]

\begin{defn}\label{chow.3}\

  {\rm (1)}
  Let $p:S'\to S$ be a faithfully flat and quasi-compact morphism of
  schemes. 
  A {\it descent datum} for $p$ consists of a pair $(X'/S',\alpha)$, 
  where
  $X'/S'$ is a scheme over $S'$ and $\alpha:p_1^*(X')\isoto p_2^*(X')$
  is an isomorphism of schemes over $S''$ satisfying the condition
  \begin{equation}
    \label{eq:chow.1}
    p_{23}^*(\alpha)\circ p_{12}^*(\alpha)=p_{13}^*(\alpha). 
  \end{equation}

  {\rm (2)} A descent datum $(X'/S',\alpha)$ is said to be {\it
  effective} if there exit a scheme $X/S$ over $S$ and an isomorphism
  $p^*(X)\simeq X'$ over $S'$. 
\end{defn}

\begin{thm}\label{chow.4}
  Let $(X'/S',\alpha)$ be a descent datum for a faithfully flat and
  quasi-compact morphism $p:S'\to S$ of base schemes. If $X'/S'$ is
  quasi-affine, then $(X'/S',\alpha)$ is effective. 
\end{thm}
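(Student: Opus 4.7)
The approach is in two steps: first prove effective descent in the affine case via faithfully flat descent of quasi-coherent algebras, then reduce the quasi-affine case to it by descending the open immersion into the affine hull.

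\emph{Affine case.} By working locally on $S$ and using the uniqueness of descent in Proposition~\ref{chow.2} to glue, we may assume $S=\Spec A$ and $S'=\Spec A'$ with $A\to A'$ faithfully flat. Write $X'=\Spec B'$; the descent datum $\alpha$ translates into an isomorphism of $A''$-algebras $\varphi:B'\otimes_{A'}A''\isoto A''\otimes_{A'}B'$ satisfying the cocycle condition on triple products. Define
\[ B:=\{\,b\in B'\mid \varphi(b\otimes 1)=1\otimes b\,\}, \]
an $A$-subalgebra of $B'$. The central technical input --- faithfully flat descent for modules --- asserts that for any $A'$-module $M'$ equipped with such a cocycle isomorphism, the analogous invariant $M$ satisfies $M\otimes_A A'\isoto M'$; this follows from exactness of the Amitsur complex $0\to A\to A'\to A''\to A'''$ associated with the faithfully flat map $A\to A'$. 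Applied with $M'=B'$, and noting that the algebra structure on $B'$ descends to $B$ because $\varphi$ is an algebra map, we obtain $B\otimes_A A'\isoto B'$, so $X:=\Spec B$ provides the desired effective descent.

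\emph{Quasi-affine case.} Let $\pi':X'\to S'$ be the structure morphism, and form the affine hull $\overline{X'}$ of $X'$ over $S'$, namely the relative spectrum of the quasi-coherent $\calO_{S'}$-algebra $\pi'_*\calO_{X'}$. The quasi-affine hypothesis guarantees that the canonical map $X'\hookrightarrow\overline{X'}$ is a quasi-compact open immersion. Functoriality of direct image and relative spectrum transports $\alpha$ to a descent datum $\overline{\alpha}$ on $\overline{X'}$, and by the affine case $(\overline{X'},\overline{\alpha})$ descends to an affine $S$-scheme $\overline{X}$ with $\overline{X}\times_S S'\simeq\overline{X'}$ compatibly with $\overline{\alpha}$.

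It remains to descend the open subscheme $X'\subset\overline{X'}$. Because $\overline{\alpha}$ restricts to $\alpha$, the isomorphism $\overline{\alpha}:p_1^*(\overline{X'})\isoto p_2^*(\overline{X'})$ carries the open subset $p_1^*(X')$ onto $p_2^*(X')$; that is, the open subset $X'\subset\overline{X'}=\overline{X}\times_S S'$ is compatible with the descent of $\overline{X}$ and so descends to an open subscheme $X\subset\overline{X}$ (open immersions, and the property of an open subset being the pullback of a given one on the base, are local for the fpqc topology --- a formal consequence of Proposition~\ref{chow.2} applied to inclusion morphisms). One then checks $X\times_S S'\simeq X'$ over $S'$ compatibly with $\alpha$, completing the proof. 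The main obstacle is the affine case --- specifically, the verification $B\otimes_A A'\isoto B'$ for the naive invariants --- which is precisely where faithful flatness is used essentially, via exactness of the Amitsur complex and the cocycle condition on $\alpha$; once this is in place, the quasi-affine extension is comparatively formal, modulo careful tracking of the open immersion under the descent functor.
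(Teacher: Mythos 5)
Your overall strategy is exactly the standard one that the paper's ``proof'' delegates to (the citation \cite[A.III.6 Proposition]{freitag-kiehl}): settle the affine case by faithfully flat descent of quasi-coherent algebras via the Amitsur complex, then handle the quasi-affine case through the affine hull $\overline{X'}=\Spec_{S'}\pi'_*\calO_{X'}$ and a descent of the open subscheme $X'\subset\overline{X'}$. So in outline you have reconstructed the intended argument, and the affine step as you present it is fine.

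Two of your justifications, however, do not hold up as stated. First, a smaller point: localizing on $S$ makes $S$ affine but not $S'$; to ``assume $S=\Spec A$, $S'=\Spec A'$'' you must replace $S'$ by a finite disjoint union of affine opens (possible since $p$ is quasi-compact) and check that descent data and effectivity pass to this refinement --- routine, but it is a step, not automatic. Likewise, transporting $\alpha$ to $\overline{X'}$ needs the flat base-change isomorphism $p_i^*\pi'_*\calO_{X'}\isoto \pi'_{i*}\,\calO_{p_i^*X'}$ (valid because $\pi'$ is quasi-compact and separated), not mere ``functoriality of direct image''. Second, and more seriously, the descent of the open subscheme is \emph{not} ``a formal consequence of Proposition~\ref{chow.2} applied to inclusion morphisms'': that proposition descends morphisms between schemes that are already pullbacks of $S$-schemes, whereas here $X'$ is precisely the object whose existence over $S$ is in question, so invoking it is circular. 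The correct argument is topological: the projection $q:\overline{X}\times_S S'\to\overline{X}$ is faithfully flat and quasi-compact, hence submersive; $\overline\alpha$-stability of $X'$ gives $q^{-1}(q(X'))=X'$, so $U:=q(X')$ is open in $\overline{X}$, and the open subscheme $X:=U$ satisfies $X\times_S S'\simeq X'$ compatibly with $\alpha$. With these repairs your proof is complete and coincides with the cited one.
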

\begin{proof}
  See \cite[A.III.6 Proposition]{freitag-kiehl}.
\end{proof}
 
\begin{remark}\label{chow.5}
  A classical Weil descent states that if $p:S'\to S$ is 
  $\Spec K\to \Spec k$
  for an algebraic separable field extension $K/k$ and $X'$ is
  a quasi-projective algebraic variety over $K$, 
  then any descent datum $(X'/K',\alpha)$ is effective. 
  Comparing Weil's descent and Theorem~\ref{chow.4}, one may
  ask whether the assumption of $X'$ in Grothendieck's flat descent 
  can be weakened by assuming only that $X'$ is
  quasi-projective. However, this is not the case. Indeed, 
  there exists an \'etale covering $S'\to S$ of
  schemes and a descent datum $(X'/S',\alpha)$ relative to 
  $S'\to S$ such that $X'\to S'$ is projective, but the descent 
  datum is not effective in the category of schemes. See 
  \cite[Tag 08KF]{stacks} for a counterexample.       
\end{remark}

\subsection{Proof of Theorem~\ref{1.2}}
\label{sec:chow.2}
Recall that a semi-abelian variety is a connected commutative smooth 
algebraic group $G$ which is an extension of abelian variety by an
algebraic torus, that is, the affine subgroup of $G$ is an algebraic
torus.  

Recall the statement of Theorem~\ref{1.2} that $X$ and $Y$ are 
two semi-abelian varieties over a field $k$, and
$K/k$ is a primary field extension. We must show that any morphism
$f:X_K\to Y_K$ over $K$ is defined over $k$. 

Let $p:S:=\Spec K\to \Spec k$ and $p_1,p_2:S'':=S\times_{Spec k} 
S\to S$ be the projection maps. Put $K':=K\otimes_k K$. 
Since $K/k$ is a primary extension, the
scheme $S''$ is irreducible and hence connected. Now let $f\in
\Hom_{K}(X_K,Y_K)$. By Proposition~\ref{chow.2}, it suffices to show
that $p_1^*(f)=p_2^*(f)$. 

Let $x=\Delta:\Spec K=S\to S''=S\times_{\Spec k} S$ be the $K$-valued point of
$S''$ defined by the diagonal morphism. As $p_i\circ \Delta=id$,
one has $x^* p_1^*(f)= x^* p_2^*(f)$, i.e. the morphisms $p_1^*(f)$
and $p_2^*(f)$ agree on the fiber over the point $x$. 
Let $\ell$ be any prime
different from $\char k$. The morphism $p_i^*(f):X_{K'}\to Y_{K'}$
induces a morphism $X_{K'}[\ell^n]\to Y_{K'}[\ell^n]$, where
$X_{K'}[\ell^n]$ denotes the $\ell^n$-torsion finite subgroup scheme
of $X_{K'}$. 
Since $X_{K'}[\ell^n]$ has order prime to $\char k$, it is a finite
\'etale group scheme.  
Denote by
$p_i^*(f)[\ell^n]$ the restriction of $p_i^*(f)$ to the finite group
scheme $X_{K'}[\ell^n]$. As $p_1^*(f)_x=p_2^*(f)_x$, one has 
 $p_1^*(f)[\ell^n]_x=p_2^*(f)[\ell^n]_x$. Since $X_{K'}[\ell^n]$ is
 finite \'etale over ${K'}$ and $S''$ is connected, 
the rigidity of \'etale morphisms \cite[I. Corollary 3.13, p.~26]{milne:ec} 
shows that $p_1^*(f)[\ell^n]=p_2^*(f)[\ell^n]$. Now the collection
$\{X_{K'}[\ell^n]\}_n$ forms a Zariski dense subset of $X_{K'}$, and it
follows that $p_1^*(f)=p_2^*(f)$. This proves Theorem~\ref{1.2}.\qed

\subsection{A descent lemma}
\label{sec:chow.3}
The purpose of this subsection is to prove another descent result. 
This yields a second and simpler proof of
Theorem~\ref{1.2} and hence yields another proof of Theorem~\ref{1.1}. 

\begin{lemma}\label{chow.6}
  Let $X$ and $Y$ be $k$-schemes locally of finite type. Let
  $\{X_n\}_{n\ge 1}$
  be a sequence of closed $k$-subschemes of $X$. Suppose that the
  scheme-theoretic closure of the image $\coprod
  X_n \to X$ is equal to $X$. Let $K/k$ be a field extension, and
  $f:X\otimes_k K \to Y\otimes_k K$ a $K$-morphism. If the morphisms
  $f_n:=f|_{X_n}: 
  X_n\otimes K\to Y\otimes K$ are defined over $k$ for all $n$, then
  $f$ is defined over $k$. 
\end{lemma}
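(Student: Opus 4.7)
The plan is to reduce to the case where $X$ and $Y$ are affine and then conclude via an elementary coefficient expansion in a $k$-basis of $K$, using the density hypothesis in the form $\bigcap_n I_n=0$.

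First I would reduce to $X=\Spec A$ and $Y=\Spec B$ affine. Being defined over $k$ is Zariski local on $X$ (descents glue uniquely along the faithfully flat cover $\Spec K\to\Spec k$), so covering $X$ by open affines and observing that the scheme-theoretic density of $\{X_n\cap U\}$ in $U$ is inherited from the global hypothesis, one may assume $X$ is affine. Reducing the target to an affine is the subtler step when $K/k$ is transcendental, since open subsets of $X_K$ need not descend to opens of $X$; I would sidestep this by working with the graph. Namely, let $g_n:X_n\to Y$ denote the given $k$-form of $f|_{X_n\otimes K}$, and define $\Gamma\subset X\times_k Y$ to be the scheme-theoretic closure of $\bigcup_n \Gamma_{g_n}$. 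One then checks that $\Gamma_K=\Gamma_f$ (using flatness of $K/k$, so scheme-theoretic image commutes with base change) and that the first projection $\Gamma\to X$ is an isomorphism, by faithfully flat descent of isomorphisms along $\Spec K\to\Spec k$ from the fact that $\Gamma_K\to X_K$ is one.

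For the affine case, write $X_n=\Spec(A/I_n)$, so the hypothesis becomes $\bigcap_n I_n=0$ in $A$. The morphism $f$ corresponds to a $K$-algebra homomorphism $\phi:B\otimes_k K\to A\otimes_k K$. Pick a $k$-basis $\{e_\alpha\}_{\alpha\in\Lambda}$ of $K$ with a distinguished element $e_{\alpha_0}=1$, giving the direct sum decomposition $A\otimes_k K=\bigoplus_\alpha A\cdot e_\alpha$. For each $b\in B$ expand
\[
\phi(1\otimes b)=\sum_{\alpha\in\Lambda} a_\alpha(b)\otimes e_\alpha,\qquad a_\alpha(b)\in A,
\]
a finite sum. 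The hypothesis that $f|_{X_n\otimes K}$ is defined over $k$ says exactly that the image of $\phi(1\otimes b)$ in $(A/I_n)\otimes_k K$ lies in the subspace $(A/I_n)\otimes 1$, i.e.\ $a_\alpha(b)\in I_n$ for every $\alpha\neq\alpha_0$ and every $n$. Since $\bigcap_n I_n=0$, this forces $a_\alpha(b)=0$ whenever $\alpha\neq\alpha_0$, so $\phi(1\otimes b)=a_{\alpha_0}(b)\otimes 1\in A\otimes 1$. Hence $\phi$ descends to a $k$-algebra homomorphism $\psi:B\to A$, and $f$ is defined over $k$.

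The hard part will be the reduction to an affine target: for non-algebraic $K/k$, pulled-back affine opens of $Y$ need not arise from opens of $X$, so a naive affine-cover argument breaks down. The graph construction above replaces this by descending a single closed subscheme of $X\times_k Y$ along $\Spec K\to\Spec k$, which is clean. The coefficient calculation itself is very short once the density hypothesis has been translated into $\bigcap_n I_n=0$.
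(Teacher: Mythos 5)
Your affine computation coincides with the paper's core argument: the paper writes $X=\Spec A$, $Y=\Spec B$, translates the hypotheses into $f_n(B)\subset A/I_n$ together with injectivity of $A\to\prod_n A/I_n$, and concludes $f(B)\subset A$; your expansion of $\phi(1\otimes b)$ in a $k$-basis of $K$ containing $1$ is exactly the justification of that last containment, so there the two proofs agree. Where you genuinely diverge is the globalization. The paper chooses affine covers $U_i$ of $X$ and $V_i$ of $Y$ with $f(U_i\otimes K)\subset V_i\otimes K$ and glues the affine descents, whereas you take the scheme-theoretic closure $\Gamma\subset X\times_k Y$ of the graphs of the $k$-forms $g_n$, identify $\Gamma_K$ with $\Gamma_f$, and descend the isomorphism ${\rm pr}_X\colon\Gamma_f\isoto X_K$ along the faithfully flat $\Spec K\to \Spec k$. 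This buys something real: the paper's compatible covers are asserted without justification, and you are right that opens of the form $U\otimes_k K$ refining $f^{-1}(V_i\otimes_k K)$ are not obviously available while $f$ is not yet known to come from $X$; the graph sidesteps this and exhibits the descended morphism explicitly as ${\rm pr}_Y\circ({\rm pr}_X|_\Gamma)^{-1}$.

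Three caveats on your write-up. (i) It is structurally redundant: once $\Gamma\to X$ is an isomorphism you are done for arbitrary $X$ and $Y$, so the preliminary reduction to affine $X$ and the separate affine case are not needed as such; the basis expansion should instead be deployed at the one place you gloss over, namely that the density hypothesis survives $-\otimes_k K$ (affine-locally, $\bigcap_n(I_n\otimes_k K)=(\bigcap_n I_n)\otimes_k K$), since ``scheme-theoretic image commutes with flat base change'' cannot be quoted as a general fact here: $\coprod_n X_n\to X\times_k Y$ is not quasi-compact. (ii) The identification $\Gamma_K=\Gamma_f$ uses that the graph of $f$ is closed in $X_K\times_K Y_K$, i.e.\ that $Y$ is separated; this is harmless for the application to semi-abelian varieties but is not assumed in the lemma. (iii) The claim that scheme-theoretic density of $\{X_n\cap U\}$ in $U$ is inherited by opens is delicate for non-reduced families (e.g.\ the infinitesimal neighborhoods of a point are dense in the line but miss any open avoiding that point); the paper makes the same assertion, and in the application it is rescued by $X$ being reduced with $\bigcup_n X_n$ topologically dense, but your graph argument allows you to drop it altogether.
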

\begin{proof}
  We first show that we can reduce the statement 
  to the case where both $X$ and $Y$ are
  affine. Let $U_i$ and $V_i$ be affine coverings of $X$ and $Y$,
  respectively, such
  that $f(U_i\otimes K)\subset V_i\otimes K$. 
  Clearly, $\{X_n\cap U_i\}_{n\ge 1}$ is a sequence of closed
  subschemes of $U_i$ satisfying the same condition of the lemma. 
  If each morphism 
  $f_i:=f|_{U_i\otimes K}$ is defined over $k$, then we can glue $f_i$
  to be a map $g$ which is defined over $k$ and one has $g\otimes K=f$. 

  Write $X=\Spec A$ and $Y=\Spec B$. Let $I_n$ be the ideal of $A$
  defining the closed subscheme $X_n$. The map $f$ is given by a map
  also denoted by
  $f: B\otimes K \to A\otimes K$. The assumptions say that 
  the induced map
  $f_n: B\otimes_k K \to (A/I_n)\otimes_k K$ is defined over $k$, that is,
  $f_n(B)\subset A/I_n$, and that the natural map $A\to \prod_n A/I_n$ is
  injective. Since the image $f(B)$ is contained in $A\otimes K$ and 
  $\prod_n A/I_n$, it is contained in $A$. This proves the lemma. \qed 
\end{proof}

\subsection{Second proof of Theorem~\ref{1.2}}
\label{sec:chow.4}

Let $f\in \Hom_K(X_K,Y_K)$. Let $\ell$ be a prime different from
$\char k$. 
Since $X[\ell^n]$ and $Y[\ell^n]$ are finite \'etale
group schemes, the functor $\calH(S):=\Hom_S(X[\ell^n]\times
S,Y[\ell^n]\times S)$ for any $k$-scheme $S$ is representable by a
finite $\Z/\ell^n$-module scheme over $k$. In particular,
one has $\calH(K)=\calH(k)$ for any primary field extension $K/k$. 
Thus, the restriction of $f$ to $X[\ell^n]\otimes_k K$ is defined over
$k$ for any $n$. Since the collection $\{X[\ell^n]\}_{n\ge 1}$ 
of finite \'etale group subschemes forms a Zariski dense
subset of $X$ and $X$ is reduced, 
it follows from Lemma~\ref{chow.6} that $f$ is defined over $k$. \qed

\section{Bounds for splitting fields of tori}
\label{sec:spl}

\subsection{Splitting fields}
\label{sec:spl.1}
Let $T$ be an algebraic torus  of dimension $d$ over a field $k$. 
The group of cocharacters of $T$, denoted $X_*(T)$, 
is a free $\Z$-module of rank $d$
equipped with a continuous action of the Galois group
$\Gamma_k:=\Gal(k_s/k)$. Thus, one has a group homomorphism
\begin{equation}\label{eq:sp1.1}
  \rho_T: \Gamma_k \to \Aut(X_*(T))\simeq \GL_d(\Z)
\end{equation}
for a choice of a basis of $X_*(T)$.
The spitting field of $T$ by definition is the smallest field
extension $k_T$ of $k$ such that $T$ splits over $k_T$. The field $k_T$ is
characterized by the property $\ker
\rho_T=\Gal(k_s/k_T)=:\Gamma_{k_T}$. Thus, the map 
$\rho_T$ induces a faithful
representation of 
$\Gal(k_T/k)$ on $X_*(T)$. In particular, $k_T$ is a finite Galois
extension of $k$. For studying algebraic tori, it is useful
to bound the degree of the splitting field of an
algebraic torus. 

\def\Max{{\rm Max}}

For any positive integer $d\ge 1$, let $\Max(d,\Q)$ denote the maximal
order of finite subgroups in $\GL_d(\Q)$. 
Clearly, one has
$[k_T:k]\le \Max(d,\Q)$ for any $d$-dimensional algebraic torus $T/k$.
The following lemma provides explicit bounds for $[k_T:k]$. 

For any integer $N\ge 1$, let $T[N]$ denote the $N$-torsion finite
group subscheme of $T$. When $N$ is prime-to-$\char k$, let $k(T[N])$
be the field extension of $k$ in $k_s$ joining all the coordinates
of points in $T[N](k_s)$, and let
\begin{equation}\label{eq:sp1.2}
  \rho_{T,N}: \Gamma_k\to \Aut(T[N]_{k_s})\simeq \GL_d(\Z/N\Z). 
\end{equation}
Clearly, $k(T[N])$ is the Galois separable extension with
$\Gamma_{k(T[N])}=\ker \rho_{T,N}$.

\begin{lemma}\label{spl.1}
  Let $T$ be a $d$-dimensional algebraic torus over $k$.

{\rm (1)} For any prime-to-$\char k$ positive integer $N$ with 
  $N\ge 3$, one has $k_T\subset k(T[N])$ and $[k_T:k]\,|\, \#
  \GL_d(\Z/N\Z)$. 

{\rm (2)} If $\char k\neq 2$, then $[k_T:k]\,|\, (2\cdot \# \GL_d(\F_2))$. 
\end{lemma}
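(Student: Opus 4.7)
The plan for part (1) is to show $k_T\subseteq k(T[N])$, after which the divisibility claim $[k_T:k]\mid\#\GL_d(\Z/N\Z)$ follows immediately from the embedding $\Gal(k(T[N])/k)\hookrightarrow\GL_d(\Z/N\Z)$ of~\eqref{eq:sp1.2}. The key ingredient is Minkowski's lemma: the kernel of the reduction $\GL_d(\Z)\to\GL_d(\Z/N\Z)$ is torsion-free for $N\geq 3$.

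I would proceed as follows. First, identify $T[N](k_s)\simeq\Hom_{\Z}(X^*(T)/N X^*(T),\,\mu_N(k_s))$ as a $\Gamma_k$-module via Cartier duality for tori (equivalently, $T[N]\simeq X_*(T)\otimes_{\Z}\mu_N$). A direct calculation from the natural Galois action $(\sigma\cdot\phi)(x)=\sigma(\phi(\sigma^{-1}x))$ shows that $\sigma\in\Gamma_k$ fixes $T[N](k_s)$ pointwise iff the reduction of $\rho^*_T(\sigma)$ modulo $N$ equals $\chi_N(\sigma)\cdot I$ on $X^*(T)/N$, where $\rho^*_T$ is the character action (dual to $\rho_T$ of~\eqref{eq:sp1.1}) and $\chi_N\colon\Gamma_k\to(\Z/N\Z)^\times$ is the mod-$N$ cyclotomic character. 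Second, observe $k(\mu_N)\subseteq k(T[N])$: for any fixed nonzero $\bar x\in X^*(T)/N$, the evaluation maps $\phi\mapsto\phi(\bar x)$ exhaust $\mu_N(k_s)$ as $\phi$ ranges over $T[N](k_s)$, so the coordinates of the points of $T[N]$ already generate all $N$-th roots of unity. Consequently $\chi_N(\sigma)=1$ for $\sigma\in\Gamma_{k(T[N])}$, and combined with the first step, $\rho^*_T(\sigma)\equiv I\pmod N$. Since $\rho^*_T(\sigma)$ is torsion in $\GL_d(\Z)$ (lying in the finite image $\Gal(k_T/k)$) and $N\geq 3$, Minkowski's lemma forces $\rho^*_T(\sigma)=I$, so $\sigma\in\Gamma_{k_T}$, completing part~(1).

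For part~(2), the analysis at $N=2$ goes through verbatim up to the conclusion $\rho^*_T(\sigma)\equiv I\pmod 2$ for $\sigma\in\Gamma_{k(T[2])}$ (note $\chi_2$ is automatically trivial since $\mu_2\subseteq k$ when $\char k\neq 2$, so no cyclotomic twist appears). The hard part will be that Minkowski's lemma fails at $N=2$: the kernel of $\GL_d(\Z)\to\GL_d(\F_2)$ contains torsion, notably $-I$. A Minkowski-style eigenvalue argument---writing a torsion $M=I+2A$ and using that the eigenvalues $1+2\lambda$ must be algebraic-integer roots of unity, which forces $\lambda\in\{0,-1\}$---nonetheless shows that every torsion element $M$ in this kernel satisfies $M^2=I$. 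The main obstacle will be refining this to the bound $[k_T:k(T[2])]\mid 2$ on the 2-elementary abelian quotient, from which the divisibility $[k_T:k]\mid 2\cdot\#\GL_d(\F_2)$ follows via $[k(T[2]):k]\mid\#\GL_d(\F_2)$ together with the tower $k\subseteq k(T[2])\subseteq k_T$.
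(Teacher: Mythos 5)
Your route differs from the paper's: the paper never passes through torsion points at all, but applies the Minkowski--Serre lemma (\cite[Lemma, p.~207]{mumford:av}) directly to the finite subgroup $\Gal(k_T/k)=\rho_T(\Gamma_k)\subset\GL_d(\Z)$ from \eqref{eq:sp1.1}: reduction mod $N\ge 3$ is injective on it, so $[k_T:k]\mid\#\GL_d(\Z/N\Z)$ by Lagrange. Your proof of the divisibility instead rests on the inclusion $k_T\subseteq k(T[N])$, and there your second step --- the claim $k(\mu_N)\subseteq k(T[N])$ --- is a genuine gap. Surjectivity of the evaluation map $\phi\mapsto\phi(\bar x)$ does not help, because it is not equivariant in the way you need: $\sigma$ fixing the point $\phi$ says $\sigma(\phi(\sigma^{-1}\bar x))=\phi(\bar x)$, and $\sigma$ moves $\bar x$ as well, so the cyclotomic action on the values can cancel against the action on $X^*(T)/N$. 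Indeed your own first step exhibits the cancellation: $\sigma\in\Gamma_{k(T[N])}$ only forces $\rho_T^*(\sigma)\equiv\chi_N(\sigma)I\pmod N$, and nothing makes the scalar equal to $1$. In fact the claim is false: for the norm one torus $T=\ker(\mathrm{Nm}\colon\Res_{\Q(\zeta_3)/\Q}\Gm\to\Gm)$ and $N=3$, the three $3$-torsion points are $1,\zeta_3,\zeta_3^2\in T(\Q)\subset\Q(\zeta_3)^\times$, so $k(T[3])=\Q$, while $k(\mu_3)=\Q(\zeta_3)=k_T$; on $T[3]\simeq X_*(T)\otimes\mu_3$ the sign of $\rho_T$ and the sign of $\chi_3$ cancel. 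So the inclusion $k_T\subseteq k(T[N])$ cannot be established for the action \eqref{eq:sp1.2} on $T[N](k_s)$ --- it would hold only if $\rho_{T,N}$ were the mod-$N$ reduction of $\rho_T$, which differs from \eqref{eq:sp1.2} by the twist $\chi_N$ --- and in any case you should derive the divisibility in (1) the paper's way, decoupled from torsion fields.

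For (2), your eigenvalue computation correctly shows that every finite-order element of $\ker(\GL_d(\Z)\to\GL_d(\F_2))$ is an involution (this is the $N=2$ case of the same lemma of Serre), but the ``main obstacle'' you flag is real and cannot be removed: inside a finite subgroup of $\GL_d(\Z)$ the kernel of mod-$2$ reduction can be the full diagonal group $\{\pm1\}^d$, e.g.\ for the signed permutation group $\{\pm1\}^d\rtimes S_d$, which maps onto the permutation subgroup of $\GL_d(\F_2)$ with kernel $\{\pm1\}^d$. Hence $[k_T:k(T[2])]\mid 2$ fails in general, and this line of argument only yields $[k_T:k]\mid 2^d\cdot\#\GL_d(\F_2)$. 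Note that the paper's proof of (2) relies on the stronger assertion $\ker\rho\cap G\subset\{\pm I\}$, which the same subgroup contradicts; correspondingly a $2$-dimensional torus over $\Q$ split by a $D_4$-extension acting by signed permutations has $[k_T:k]=8\nmid 12=2\cdot\#\GL_2(\F_2)$. So for $d\ge 2$ the factor $2$ in (2) should be $2^d$, and your argument (and the paper's) proves exactly that weaker bound.
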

\begin{proof}
  This follows from the fact 
  that the reduction map $\GL_d(\Z)\to\GL_d(\Z/N\Z)$
  induces an injective map on any finite subgroup $G$ if $N\ge 3$, and
  a map $\rho:G\to \GL_d(\Z/N\Z)$ with $\ker \rho \cap G\subset \{\pm
  1\}$ if $N=2$. This fact follows 
  immediately from a lemma of Serre \cite[Lemma, p.~207]{mumford:av}. \qed  
\end{proof}

Lemma~\ref{spl.1} (1) states that if $N\ge 3$ with $(\char k, N)=1$
and all $N$-torsion points of $T$ are defined over $k$, then $T$
splits over $k$. This reminds a theorem of Raynaud, stating that 
if an abelian variety and all its $N$-torsion points are defined over
$k$, and $N\ge 3$, then the abelian variety has semistable reduction
away from $N$ (cf.~\cite[p.~403]{silverberg-zarhin:1995}).

\begin{defn}\label{spl.2}
  We say that a field $k$ is {\it Hilbertian} if it satisfies one of the
  following variants of the Hilbert irreducibility property:
  \begin{itemize}
  \item [(a)] For any irreducible and separable polynomial
     $f(x,t)=a_d(t)x^d+\dots+ a_0(t)\in k(t)[x]$ 
     over $k(t)$ of degree $d\ge
    1$, where $x$ and $t$ are indeterminates, 
    there exist infinitely many specializations $t=t_0\in k$ such
    that $f(x,t_0)$ is an irreducible and separable polynomial over
    $k$ of degree $d$.  
  \item [(b)] For any $n\ge 1$ and any finite separable
   extension $K_t/k(t_1,\dots, t_n)$ of 
  a rational function field $k(t_1,\dots, t_n)$ of
transcendental degree $n$, there exist infinitely many
specializations $t\leadsto t_0\in k$ such that $K_{t_0}$ is a 
finite separable
extension of $k$ of the same degree $[K_t:k(t_1,\dots,t_n)]$. 
  \end{itemize}
\end{defn}

The conditions (a) and (b) are equivalent \cite[\S 9.5, Remark
(1)]{serre:mw}. 
It is well known that any global field is Hilbertian 
(see \cite[Theorem 13.4.2]{fried-jarden:3}). 
If $k$ is any
field, then $k(t)$ and any finitely generated extension of it
are Hilbertian (see \cite[Theorem 13.4.2]{fried-jarden:3},
also see p.~155 and Theorem 12.10 in the first edition.)

We shall prove
\begin{thm}\label{spl.3}
  For any $d\ge 1$ and any Hilbertian
  field $k$ of \ch zero,
  there exist infinitely many
  Galois extensions $K/k$ with group isomorphic to a finite subgroup
  $G\subset \GL_d(\Q)$ of order $\Max(d,\Q)$.
\end{thm}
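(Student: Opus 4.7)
The plan is to realize $G$ as the Galois group of infinitely many distinct extensions of $k$ by linearizing its action on affine $d$-space and invoking Hilbert irreducibility. Since $\char k = 0$ we have $\Q \subset k$, so the inclusion $G \subset \GL_d(\Q)$ makes $G$ act faithfully and linearly on $V = k^d$, hence on $L := k(x_1,\dots,x_d)$; faithfulness of the action on $L$ follows from linear independence of characters (Lemma~\ref{ch.1}). Setting $F := L^G$, the extension $L/F$ is finite Galois with group $G$, and $F$ has transcendence degree $d$ over $k$.

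The crucial step is to identify $F$ as a rational function field over $k$. In every dimension $d \ge 1$, the maximal finite subgroups of $\GL_d(\Q)$ are known (from the classification of Bravais groups of integral lattices) to be finite real reflection groups, namely Weyl groups of root systems ($G_2$, $B_3$, $F_4$, $E_6$, $E_7$, $E_8$, $B_d$, etc.) or direct products thereof. In particular $G$ acts on $V$ as a reflection group, and the Shephard--Todd--Chevalley theorem in characteristic zero gives
\[
  k[x_1,\dots,x_d]^G = k[f_1,\dots,f_d]
\]
for algebraically independent homogeneous invariants $f_1,\dots,f_d$. Hence $F = k(f_1,\dots,f_d)$ is purely transcendental over $k$ of transcendence degree $d$.

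I would next apply Hilbert irreducibility in the form of Definition~\ref{spl.2}(b) to the finite separable Galois extension $L/k(f_1,\dots,f_d)$ with group $G$. Concretely, choose a primitive element $\theta$ for $L/k(f_1,\dots,f_d)$ with minimal polynomial $h(X) \in k(f_1,\dots,f_d)[X]$; then for infinitely many specializations $(f_1,\dots,f_d) \leadsto (a_1,\dots,a_d) \in k^d$, the polynomial $h_a(X) \in k[X]$ is irreducible and separable of degree $|G|$, and, by the standard Galois-preserving refinement of HIT, its splitting field $K_a/k$ is a Galois extension of $k$ with group isomorphic to $G$. Extracting infinitely many pairwise non-isomorphic $K_a$'s from this thick Hilbertian set is routine by a linear disjointness argument (the compositum of finitely many $K_a$'s is Galois over $k$, and generic further specialization yields an extension linearly disjoint from this compositum).

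The principal non-trivial input is the classification fact that every maximal finite subgroup of $\GL_d(\Q)$ is a reflection group; with this in hand, Chevalley's theorem and the Hilbertian specialization both proceed cleanly. Were one to avoid the classification, one could still attempt to apply Hilbert irreducibility to the possibly non-rational but always unirational variety $\Spec k[x_1,\dots,x_d]^G$, but this is appreciably more delicate than the reflection-group route.
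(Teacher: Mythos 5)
Your overall route coincides with the paper's proof of Proposition~\ref{spl.5}: read off the maximal-order finite subgroups of $\GL_d(\Q)$ from the classification (Feit \cite{feit:GLnQ}, Table~\ref{Ta1}), deduce from Chevalley's theorem on reflection-group invariants that the fixed field $k(x_1,\dots,x_d)^G$ is purely transcendental over $k$, and then apply Hilbert irreducibility in its Galois-group-preserving form to produce infinitely many Galois extensions of $k$ with group $G$. The one genuine flaw is your blanket claim that in every dimension the maximal-order finite subgroup of $\GL_d(\Q)$ is a finite real reflection group. This fails for $d=6$: the group of maximal order there is $G=\langle W(E_6),-I\rangle$, of order $103680=2\cdot\#W(E_6)$, and since $-I\notin W(E_6)$ and $-I$ is not a reflection, the reflections in $G$ generate only the index-two subgroup $W(E_6)$; so $G$ is not a reflection group and the Shephard--Todd--Chevalley theorem does not apply to it directly. (The products $W(E_8)\times W(A_1)$ and $W(E_8)\times W(G_2)$ in dimensions $9$ and $10$ are honest reflection groups acting on the orthogonal direct sum, so they cause no trouble.) The paper flags exactly this exception (``$G$ is a finite reflection group $W$ except $d=6$'') and covers it by citing \cite[Proposition 6]{BDEPS}, which gives rationality of $k(x_1,\dots,x_d)^G$ for all the groups in Table~\ref{Ta1}.

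The gap is narrow and easy to close, so your argument survives with a patch. Write $G=W(E_6)\times\{\pm I\}$. By Chevalley, $k(x_1,\dots,x_6)^{W(E_6)}=k(I_1,\dots,I_6)$ with the $I_j$ homogeneous of degrees $2,5,6,8,9,12$; the central element $-I$ multiplies each $I_j$ by $(-1)^{\deg I_j}$, hence fixes the four invariants of even degree and negates the two of odd degree, say $u$ and $v$. The fixed field of this involution is generated over $k$ by the four even-degree invariants together with $u^2$ and $uv$ (note $v=uv/u$ and $v^2=(uv)^2/u^2$, and a degree count shows this subfield is exactly the fixed field); being generated by six elements while having transcendence degree six, it is purely transcendental. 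With that case handled, the remaining steps of your proof --- the Galois-preserving Hilbert specialization and the extraction of infinitely many pairwise distinct (indeed linearly disjoint) realizations over the Hilbertian field $k$ --- are standard and match what the paper does implicitly. Two cosmetic remarks: faithfulness of the $G$-action on $k(x_1,\dots,x_d)$ is immediate from faithfulness of the linear action on the span of $x_1,\dots,x_d$, so Lemma~\ref{ch.1} is not needed there; and in characteristic zero Chevalley's original theorem \cite{chevalley:weyl} suffices, as the paper notes.
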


As an immediate consequence of Theorem~\ref{spl.3}, we
attain the best bound for $[k_T:k]$.

\begin{cor}\label{spl.4}
  For any $d\ge 1$ and any number field $k$, 
  there exist infinitely many $d$-dimensional algebraic tori $T$
  over $k$ such that $[k_T:k]=\Max(d,\Q)$. 
\end{cor}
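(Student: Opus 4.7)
The plan is to deduce Corollary~\ref{spl.4} directly from Theorem~\ref{spl.3} together with the anti-equivalence \eqref{eq:1.2} between $k$-tori that split over a finite Galois extension $K/k$ and $\Z\Gal(K/k)$-lattices. Since any number field $k$ is Hilbertian (cf.\ \cite[Theorem 13.4.2]{fried-jarden:3}) and of characteristic zero, Theorem~\ref{spl.3} yields infinitely many Galois extensions $K/k$ whose Galois group is isomorphic to a fixed finite subgroup $G\subset\GL_d(\Q)$ of order $\Max(d,\Q)$. The task reduces to producing, for each such $K$, a $d$-dimensional $k$-torus $T$ whose splitting field is exactly $K$.

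First I would replace $G\subset\GL_d(\Q)$ by a conjugate inside $\GL_d(\Z)$: the standard averaging construction $L:=\sum_{g\in G}g\cdot\Z^d\subset\Q^d$ produces a $G$-stable lattice of rank $d$, and choosing a $\Z$-basis of $L$ identifies $G$ with a finite subgroup of $\GL_d(\Z)$ of the same order. This yields a faithful $\Z G$-lattice $M\cong\Z^d$. Pulling the $G$-action back along the quotient $\Gamma_k\twoheadrightarrow \Gal(K/k)\cong G$ gives a continuous $\Gamma_k$-action on $M$ that factors through $G$ and is faithful modulo $\Gamma_K$. The anti-equivalence in \eqref{eq:1.2} then attaches to $M$ a $d$-dimensional $k$-torus $T$ which splits over $K$; concretely one may take $T=\Spec k_s[M]^{\Gamma_k}$.

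Next I would verify that $k_T=K$. By construction the representation $\rho_T:\Gamma_k\to\Aut(M)\simeq\GL_d(\Z)$ (which agrees with the cocharacter representation up to duality) has kernel exactly $\Gamma_K$, so the characterization $\ker\rho_T=\Gamma_{k_T}$ recalled in Section~\ref{sec:spl.1} gives $k_T=K$, and hence $[k_T:k]=|G|=\Max(d,\Q)$. Since isomorphic tori must have the same splitting field, the infinitely many distinct $K$ produced by Theorem~\ref{spl.3} give infinitely many pairwise non-isomorphic $d$-dimensional $k$-tori with the desired degree.

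The real content lies in Theorem~\ref{spl.3}, which supplies the Galois-theoretic input; once that is granted, the only thing to be careful about is that the chosen embedding $G\hookrightarrow\GL_d(\Z)$ be \emph{faithful} (so that the splitting field is $K$ and not a proper subfield), and that the torus associated via \eqref{eq:1.2} is genuinely $d$-dimensional, both of which are automatic from the construction. Thus no further work beyond Theorem~\ref{spl.3} is required.
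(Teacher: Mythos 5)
Your proposal is correct and follows exactly the route the paper intends: the corollary is stated there as an immediate consequence of Theorem~\ref{spl.3} (number fields being Hilbertian of characteristic zero) combined with the tori--lattices correspondence \eqref{eq:1.2}, and your filling in of the details (stabilizing a lattice to embed $G$ faithfully in $\GL_d(\Z)$, pulling back along $\Gamma_k\twoheadrightarrow\Gal(K/k)$, and checking $\ker\rho_T=\Gamma_K$ so that $k_T=K$) is precisely the argument the paper leaves implicit.
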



\subsection{Proof of Theorem~\ref{spl.3}} 
\label{sec:spl.2}

According to \cite{feit:GLnQ}, the signed permutation group
$\{\pm 1\}^d \rtimes S_d\subset \GL_d(\Q)$ attains 
the maximal order of finite subgroups
of $\GL_d(\Q)$ except for $d\in \{2,4,6,7,8,9,10\}$.  
The exceptional cases are listed in Table 1 (see 
\cite[Table 1]{BDEPS}) with maximal-order finite
subgroups. Here $W(D)$ denotes the Weyl group of the root system with
Dynkin diagram $D$.

\begin{table}[ht]
\begin{center}
\bigskip\begin{tabular} { r  | l | r}

$ d$   & Maximal-order subgroup $G$
  & $\Max(d,\Q)=\# G$\\
     \hline
     2      & $W(G_2)$                   & 12\\
     4      & $W(F_4)$                   & 1152 \\
     6      & $\langle W(E_6),-I\rangle$ & 103680\\
     7      & $W(E_7)$                   & 2903040\\
     8      & $W(E_8)$                   & 696729600 \\
     9      & $W(E_8)\times W(A_1)$      & 1393459200\\
    10      & $W(E_8)\times W(G_2)$      & 8360755200\\ 
all other $d$  & $W(B_d) = W(C_d) = \{\pm 1\}^d \rtimes S_d$ & $2^d
  d!$ \\ 
    \end{tabular}
    \vspace*{1ex}
    \caption{Maximal-order finite subgroups of $\GL_d(\Q)$} \label{Ta1}
\end{center}\end{table}

Theorem~\ref{spl.3} follows from the following proposition.

\begin{prop}\label{spl.5}
  Let $G$ be the finite subgroup as in Table 1. For any Hilbertian
  field $k$ of \ch zero, there exist infinitely many finite Galois
  extensions $K/k$ 
  with group isomorphic to $G$.  
\end{prop}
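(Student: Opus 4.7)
The plan is to realize each $G$ in Table~\ref{Ta1} as the Galois group of a regular Galois cover of a rational variety over $\Q$, and then apply Hilbert's irreducibility theorem (HIT) over $k$ to specialize it to infinitely many $G$-Galois extensions of $k$. Since $G$ is prescribed in Table~\ref{Ta1} as a subgroup of $\GL_d(\Q)$, it acts faithfully on $V = \Q^d$, and this extends to a $G$-action on $K_0 := \Q(V) = \Q(x_1,\ldots,x_d)$. The extension $K_0/K_0^G$ is Galois with group $G$ by classical Galois theory, and $K_0$ is regular over $\Q$, being purely transcendental.

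The crux of the argument is to show that $F_0 := K_0^G$ is itself purely transcendental over $\Q$. For the Weyl-group entries $W(B_d), W(G_2), W(F_4), W(E_6), W(E_7), W(E_8)$, and for the products $W(E_8)\times W(A_1)$ and $W(E_8)\times W(G_2)$, this is immediate from the Chevalley--Shephard--Todd theorem, since each such $G$ is (a product of) real reflection groups in the standard representation on $V$. The only case not directly covered is $\langle W(E_6),-I\rangle = W(E_6)\times\{\pm I\}$, since $-I$ is not a reflection on the six-dimensional space. I would handle this by an explicit calculation: letting $f_2, f_5, f_6, f_8, f_9, f_{12}$ be fundamental invariants of $W(E_6)$ (indexed by degree), the element $-I$ acts on each $f_i$ by the sign $(-1)^{\deg f_i}$, and a direct check (the extension on the right below is degree two into the one on the left) gives
\[
F_0 \,=\, \Q(f_2, f_5, f_6, f_8, f_9, f_{12})^{\langle -I\rangle} \,=\, \Q\bigl(f_2,\, f_6,\, f_8,\, f_{12},\, f_5^2,\, f_9/f_5\bigr),
\]
which is purely transcendental of transcendence degree $6$ over $\Q$.

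With $F_0 \cong \Q(t_1,\ldots,t_d)$ established as rational, one base-changes to the Hilbertian field $k$ (which contains $\Q$ by the characteristic-zero hypothesis) to obtain a regular $G$-Galois cover $k(V)/k(t_1,\ldots,t_d)$. I would then invoke the standard version of HIT for regular Galois extensions of a Hilbertian field (e.g.\ as in Fried--Jarden, \emph{Field Arithmetic}): there is an infinite Hilbert subset $\calH \subset k^d$ such that for each $(a_1,\ldots,a_d) \in \calH$, the specialization at $t_i=a_i$ is a Galois extension of $k$ with group $G$, and these produce infinitely many pairwise distinct extensions. The most delicate step, as anticipated, is the verification of rationality of $F_0$ for the non-reflection group $\langle W(E_6),-I\rangle$; if one preferred to avoid this invariant-theoretic computation, an alternative is to realize $W(E_6)$ over $k$ infinitely often via the reflection approach and then compose with independently chosen quadratic extensions $k(\sqrt{a})/k$ (with $a$ ranging over $k^\times/(k^\times)^2$), obtaining infinitely many composita with Galois group $W(E_6)\times\{\pm 1\}$ by a standard linear disjointness argument.
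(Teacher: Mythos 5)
Your proof is correct and follows essentially the same route as the paper's: show that the invariant field $k(x_1,\dots,x_d)^G$ is purely transcendental (via Chevalley's theorem on reflection-group invariants) and then apply Hilbert irreducibility over the Hilbertian field $k$. The only difference is that where the paper cites \cite[Proposition 6]{BDEPS} to cover the exceptional group $\langle W(E_6),-I\rangle$, you verify that case directly, and your computation of the $\langle -I\rangle$-invariants using the degrees $2,5,6,8,9,12$ (yielding the rational field generated by $f_2,f_6,f_8,f_{12},f_5^2,f_9/f_5$) is correct.
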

\begin{proof}
  Note that $G$ is a finite reflection group $W$ except $d=6$.
  Regarding $\GL_n(\Q)=\GL(V)$ and putting $V_k=V\otimes_\Q k$, 
  where $V=\Q^n$ and $V_k=k^n$, one has
  $\GL_n(k)=\GL(V_k)$. In this case, 
  $W \subset \GL_n(V_k)$ is also a finite reflection group acting on
  $V_k$. By \cite[Proposition 6]{BDEPS} (mainly following from 
  Chevalley's theorem \cite[Theorem (A)]{chevalley:weyl}), 
  the fixed subfield 
  $k(x_1,\dots,x_d)^G=k(I_1,\dots, I_d)$ is a purely transcendental
  extension of $k$. By the Hilbert irreducibility property, there
  exist infinitely many finite Galois
  extensions $K$ of $k$ with Galois group isomorphic to $G$. \qed  

  
\end{proof}

\subsection{Remarks on classification of tori}
\label{sec:spl.3}
\def\T{{\rm Tp}}
Consider pairs $(\Gamma,\rho)$ which consist of a finite group $\Gamma$
together with a group monomorphism $\rho:\Gamma\to\GL_d(\Z)$ for some
positive integer $d$. We call $d$ the degree of $(G,\rho)$ 
(or of $\rho$).   
Two such pairs $(\Gamma_i,\rho_i:\Gamma_i\to \GL_{d_i}(\Z))$ 
($i=1,2$) are said to have {\it the same 
  type} 
if $d_1=d_2$ and there exist an isomorphism
$\alpha:\Gamma_1\isoto \Gamma_2$ and an element $g\in \GL_{d_1}(\Z)$
such that $g \rho_1(\gamma)g^{-1}=\rho_2( \alpha(\gamma))$ for all
$\gamma\in \Gamma_1$. 
We say that two integral representations $(M_1, \rho_1)$ and
$(M_2,\rho_2)$ of $\Gamma$ of finite rank have {\it the same type} if
there exists an automorphism $\alpha$ of $\Gamma$ such that
$(M_1,\rho_1\circ \alpha)\simeq (M_2,\rho_2)$ as $\Z\Gamma$-lattices.     
Let $(\Z^d,\rho)$ be the integral representation
of $\Gamma$ associated to $(\Gamma,\rho)$. Then two integral
representations $(\Z^{d_1},\rho_1)$ and $(\Z^{d_2},\rho_2)$ of
$\Gamma$ associated
to $(\Gamma,\rho_1)$ and $(\Gamma,\rho_2)$ have the same type 
if and only if so do $(\Gamma,\rho_1)$ and $(\Gamma,\rho_2)$.
In general, there may exist non-isomorphic faithful integral
representations of a finite group $\Gamma$ which have the same type. 
Let ${\rm Tp}$ (resp.~${\rm Tp}_d$) be the set of types of such pairs
$(\Gamma,\rho)$ (resp.~those of degree $d$). It is easy to see that the map
$(\Gamma,\rho)\mapsto \rho(\Gamma)$ induces a bijection between ${\rm
  Tp}_d$ and the set of $\GL_d(\Z)$-conjugacy classes of finite
subgroups of $\GL_d(\Z)$.    

Similarly, we consider pairs
$(\Gamma,\rho_\Q:\Gamma\embed \GL_d(\Q))$ and define two pairs to have
the same type in the same way.  
Let ${\rm Tp}_{\Q}$ (resp.~${\rm Tp}_{d,\Q}$) be the set of types 
of all such pairs $(\Gamma,\rho_\Q)$ (resp.~those of degree $d$). 
Similarly, we can identify the set ${\rm Tp}_{d,\Q}$ with that of
$\GL_d(\Q)$-conjugacy classes of finite subgroups of $\GL_d(\Q)$.
For each pair $(\Gamma,
\rho:\Gamma\embed \GL_d(\Z))$ we
denote by $\rho_\Q:\Gamma\to \GL_d(\Q)$ the map induced from the
inclusion $\GL_d(\Z)\subset \GL_d(\Q)$. This gives rise to a natural map
${\rm Tp}\to {\rm Tp}_\Q$. This map is surjective, because 
any finite subgroup of $\GL_d(\Q)$ preserves a lattice of $\Q^d$ and
then it is conjugate to a subgroup of $\GL_d(\Z)$.

To any algebraic torus $T$ over a field $k$ we associate a triple
$(k_T,\Gal(k_T/k), \rho_T)$, where 
\begin{itemize}
\item $k_T$ is the splitting field of $T$,
\item $\Gal(k_T/k)$ is the Galois group of $k_T/k$, and
\item $\rho_T:\Gal(k_T/k)\to \GL_d(\Z)$ ($d=\dim T$) 
is the faithful representation induced by \eqref{eq:sp1.1}.
\end{itemize}
The map $\rho_T$ is only well-defined up to $\GL_d(\Z)$-conjugacy. 
It is  known that 
the triple $(k_T,\Gal(k_T/k), \rho_T)$ determines
$T$ up to $k$-isomorphism.
Clearly the type $[(\Gal(k_T/k), \rho_T)]$ of 
$(\Gal(k_T/k), \rho_T)$ is well-defined, which we call
the \emph{type} of $T$. 
Thus, the association $[(\Gal(k_T/k),\rho_T)]$ to $T$ induces a
well-defined map ${\rm Tori}_k\to {\rm Tp}$, where 
${\rm Tori}_k$ is the set of 
isomorphism classes of $k$-tori. 
The pre-image of each type $[(\Gamma,\rho)]$ consists of all pairs
$(k',\rho')$, where $k'$
runs through finite Galois extensions $k'$ of $k$ in $k_s$ such that
$\Gal(k'/k)\simeq \Gamma$, and $\rho'$ runs through non-isomorphic
integral representations of $\Gamma$ of the same type as $\rho$. 
Let ${\rm Tori}_k^{\rm isog}$ be the set of 
isogeny classes of $k$-tori.
Similarly the 
association $[(\Gal(k_T/k),\rho_{T,\Q})]$ to $T$ 
induces a well-defined map
${\rm Tori}_k^{\rm isog}\to {\rm Tp}_\Q$. 
The pre-image of
$[(\Gamma,\rho_\Q)]$ consists of pairs $(k',\rho'_\Q)$ with $k'$ as
above and $\rho_\Q'$ running over non-isomorphic $\Q$-representations of
$\Gamma$ that have the same type as $\rho_\Q$.
Thus, classifying ${\rm Tori}_k$
and ${\rm Tori}_k^{\rm isog}$ can be reduced essentially to 
classifying $\T$ and $\T_\Q$, respectively,
and solving the inverse Galois problem.   

The sets $\T_{d}$ and $\T_{d,\Q}$ have been classified for lower degrees
$d$. 
It is obvious that $|\T_1|=2$ and $|\T_{1,\Q}|=2$. 
Two-dimensional tori have been classified by
Voskresenskii~\cite{Vos:tori} (cf.~Seligman~\cite{seligman});
particularly they prove $|\T_2|=13$.  
See \cite[Lemma 4.7]{kunyavaskii-sansuc:2001} for a list of 9
indecomposable subgroups in $\GL_2(\Z)$ (labeled $G_1,\dots, G_9$
there). The remaining 4 decomposable subgroups are 
$C_1=\<\diag(1,1)\>$, $C_2=\<\diag(-1,-1)\>$, $C_2'=\<\diag(1,-1)\>$
and $C_2\times C_2=\{\diag(\pm 1,\pm 1)\}$. One can check using 
characters that $C_2'$ and $G_1$, $C_2\times C_2$ and $G_2$, $G_5$ and
$G_6$ are conjugate in $\GL(2,\Q)$. Thus, $|\T_{2,\Q}|=10$. 
Tahara \cite{tahara} classifies finite subgroups of
$\GL(3,\Z)$ up to conjugacy. However, 
there are overlapping 2 same classes in Tahara's list (corrected in 
Ascher and Grimmer \cite{ascher-grimmer}) and
one has $|\T_3|=73$. According to \cite{crystal}, we know $|\T_4|=710$.
Conjugacy classes of finite subgroups of $\GL_d(\Q)$ 
for $d\le 4$ are classified in \cite{BBNWZ}. 
From this we have 
$|\T_{3,\Q}|=32$ and $|\T_{4,\Q}|=227$; 
also see \cite[pp.~54, 69]{kang-zhou}. We make a short list:
\begin{center}
\begin{tabular}{|c|c|c|c|c|c|c|}  \hline
$d$         & $1$ & $2$ & $3$   & $4$   & $5$    & $6$    \\ \hline
$\T_{d,\Q}$ & $2$ & $10$ & $32$ & $227$ & $955$  & $7103$ \\ \hline
$\T_d$      & $2$ & $13$ & $73$ & $710$ & $6079$ & $85308$\\ \hline
\end{tabular}   
\end{center}
(our reference for $d=5,6$ is \cite[Section 3]{hoshi-yamasaki}).
These explicit classifications have been
used to make further development of the Noether problem for finite
subgroups of $\GL_d(\Q)$ (for $d=3,4$); see \cite{kang-zhou} 
and the references therein for details. These also play an important
role in the fundamental work of Hoshi and Yamasaki
\cite{hoshi-yamasaki} on 
the rationality problem of tori of dimension up to $5$. 
  
\section*{Acknowledgments}
This paper grew out from lectures the author gave in the 2017 Fall 
NCTS course. He thanks the audience, Nai-Heng Sheu and Jiangwei Xue 
for their input and discussions. He is grateful to Ming-Chang
Kang and Boris Kunyavskii 
for pointing out mistakes and helpful comments on an earlier
venison of this paper. 
The author is partially supported by the MoST grants 
104-2115-M-001-001-MY3 and 107-2115-M-001-001-MY2. 
He thanks the referee for a careful
reading and helpful comments.






\def\jams{{\it J. Amer. Math. Soc.}} 
\def\invent{{\it Invent. Math.}} 
\def\ann{{\it Ann. Math.}} 
\def\ihes{{\it Inst. Hautes \'Etudes Sci. Publ. Math.}} 

\def\ecole{{\it Ann. Sci. \'Ecole Norm. Sup.}}
\def\ecole4{{\it Ann. Sci. \'Ecole Norm. Sup. (4)}} 
\def\mathann{{\it Math. Ann.}} 
\def\duke{{\it Duke Math. J.}} 
\def\jag{{\it J. Algebraic Geom.}} 
\def\advmath{{\it Adv. Math.}}
\def\compos{{\it Compositio Math.}} 
\def\ajm{{\it Amer. J. Math.}} 
\def\crelle{{\it J. Reine Angew. Math.}}
\def\plms{{\it Proc. London Math. Soc.}}
\def\jussieu{{\it J. Inst. Math. Jussieu}} 
\def\grenoble{{\it Ann. Inst. Fourier (Grenoble)}}
\def\imrn{{\it Int. Math. Res. Not.}}
\def\tams{{\it Trans. Amer. Math. Sci.}}
\def\mrl{{\it Math. Res. Lett.}}
\def\cras{{\it C. R. Acad. Sci. Paris S\'er. I Math.}} 
\def\mathz{{\it Math. Z.}} 
\def\cmh{{\it Comment. Math. Helv.}}
\def\docmath{{\it Doc. Math. }}
\def\asian{{\it Asian J. Math.}}
\def\acta{{\it Acta Math.}}
\def\indiana{{\it Indiana Univ. Math. J.}}

\def\acad{{\it Proc. Nat. Acad. Sci. USA}}

\def\jlms{{\it J. London Math. Soc.}}
\def\blms{{\it Bull. London Math. Soc.}}
\def\manmath{{\it Manuscripta Math.}} 
\def\jnt{{\it J. Number Theory}} 
\def\ijm{{\it Israel J. Math.}}
\def\ja{{\it J. Algebra}} 
\def\pams{{\it Proc. Amer. Math. Sci.}}
\def\smfmemoir{{\it Bull. Soc. Math. France, Memoire}}
\def\bsmf{{\it Bull. Soc. Math. France}}
\def\sb{{\it S\'em. Bourbaki Exp.}}
\def\jpaa{{\it J. Pure Appl. Algebra}}
\def\jems{{\it J. Eur. Math. Soc. (JEMS)}}
\def\jtokyo{{\it J. Fac. Sci. Univ. Tokyo}}
\def\cjm{{\it Canad. J. Math.}}
\def\jaums{{\it J. Australian Math. Soc.}}
\def\pspm{{\it Proc. Symp. Pure. Math.}}
\def\ast{{\it Ast\'eriques}}
\def\pamq{{\it Pure Appl. Math. Q.}}
\def\nagoya{{\it Nagoya Math. J.}}
\def\forum{{\it Forum Math. }}
\def\tjm{{\it Taiwanese J. Math.}}
\def\rt{{\it Represent. Theory}}
\def\bordeaux{{\it J. Th\'eor. Nombres Bordeaux}}
\def\ijnt{{\it Int. J. Number Theory}}
\def\jmsj{{\it J. Math. Soc. Japan}}
\def\rims{{\it Publ. Res. Inst. Math. Sci.}}
\def\ca{{\it Comm. Algebra}}
\def\osaka{{\it Osaka J. Math.}}
\def\bams{{\it Bull. Amer. Math. Soc.}}

\def\tp{{To appear in }}

\newcommand{\princeton}[1]{Ann. Math. Studies #1, Princeton
  Univ. Press}

\newcommand{\LNM}[1]{Lecture Notes in Math., vol. #1, Springer-Verlag}

\end{document}